\newtheorem{theorem}{Theorem}[section]
\newtheorem{lemma}[theorem]{Lemma}
\newtheorem{prop}[theorem]{Proposition}
\theoremstyle{definition}
\newtheorem*{notation}{Notation}
\newtheorem{prob}[theorem]{Problem}
\newcommand{\LL}{{\mathrm {L}}}
\newcommand{\SSS}{{\mathrm {S}}}
\newcommand{\OO}{{\mathrm {O}}}
\newcommand{\la}{\langle}
\newcommand{\ra}{\rangle}
\newcommand{\Z}{\mathbb{Z}}
\newcommand{\F}{\mathbb{F}}
\newcommand{\Alt}{\mathrm{A}}
\newcommand{\Sym}{\mathrm{S}}
\newcommand{\Irr}{{\mathrm {Irr}}}
\newcommand{\cd}{{\mathrm {cd}}}
\newcommand{\St}{{\mathrm {St}}}
\newcommand{\Centralizer}{{\mathbf {C}}}
\newcommand{\kernel}{{\mathrm {ker}}}
\providecommand{\Aut}{\mathop{\rm Aut}\nolimits}%
\providecommand{\Out}{\mathop{\rm Out}\nolimits}%
\begin{document}


\title[Simple exceptional groups of Lie type]{\bf Huppert's conjecture for finite simple exceptional groups of Lie type}

\author[H. P. Tong-Viet]{Hung P. Tong-Viet}
\address{Department of Mathematics and Statistics, Binghamton University, Binghamton, NY 13902-6000, USA}
\email{htongvie@binghamton.edu}

\date{\today}
\keywords{character degrees; exceptional groups of Lie type; Huppert's
Conjecture} \subjclass[2000]{Primary 20C15, secondary 20D05, 20G40}

\begin{abstract} Let $G$ be a finite group and let $\cd(G)$ be
the set of its complex irreducible character degrees. We show that if $\cd(G)=\cd(H),$ where $H$
is a finite simple exceptional group of Lie type, then $G\cong
H\times A,$ where $A$ is an abelian group. This completes the verification of Huppert's
Conjecture for all finite simple exceptional groups of Lie type.

\end{abstract}

\thanks{}
\maketitle

\section{Introduction}\label{sec:intro}
Let $G$ be a finite group and let $\Irr(G)$ be the set of its complex irreducible characters.  By a `character degree' we mean the degree of a complex irreducible character. Let $\cd(G)$ be the set of all character degrees of $G$. One of the main questions in the character theory of finite groups is to determine which properties of a finite group can be determined by its character table. Note that the character degree set of a group can be read off from the character table. Thus it is interesting to ask how much  the character degree set of a group knows about the group itself.

We cannot expect too much since the character degree set of a finite group $G$ and the character degree set of a direct product $G\times A$, where $A$ is an abelian group, coincide. Navarro \cite{Navarro1} showed that the character degree set alone cannot determine the solvability of the group.  He constructed a finite perfect group $H$ and a finite solvable group $G$ such that $\cd(G)=\cd(H)$. Furthermore, Navarro and Rizo \cite{NR} found a finite perfect group and a finite nilpotent group with the same character degree set.

On the other hand, Huppert proposed in \cite{Hupp} that all finite nonabelian simple groups are essentially determined by the set of their character degrees: if $H$ is a finite nonabelian simple group and $G$ is a finite group such that $\cd(G)=\cd(H)$, then $G\cong H\times A$ for some finite abelian group $A$. The main purpose of this paper is to prove the following theorem.

\begin{theorem}\label{th:main} Let $G$ be a finite group and let ${H}$ be a finite simple exceptional group of Lie type.  If
$\cd(G)= \cd({H}),$ then $G\cong H\times A,$ where $A$ is abelian.
\end{theorem}

Huppert's Conjecture  has been verified for various   finite simple exceptional groups of Lie type (see \cite{Hupp,Tong,HT1,HT2,Wake}). To complete the verification of this conjecture for finite simple exceptional groups of Lie type, we verify the conjecture for the following finite simple groups: 
$\textrm{F}_4(q),~{}^2\textrm{E}_6(q),~\textrm{E}_6(q),~\textrm{E}_7(q),~\textrm{E}_8(q) $ where $q$ is a prime power.

Huppert's Conjecture has been verified for all sporadic simple groups (see \cite{HT3} and the references therein), the alternating groups  \cite{BTZ} and many finite simple classical groups of small Lie rank (see \cite{HTW}). The conjecture remains open for finite simple classical groups of large rank.

In \cite{HMTW}, we extend the conjecture to include certain  quasi-simple groups.  Recently, an analog of Huppert's Conjecture was proposed by Qian in \cite[Problem 20.79]{KM} using codegrees. Recall that the codegree of a character $\chi\in\Irr(G)$ is the ratio $|G:\kernel(\chi)|/\chi(1)$, where $\kernel(\chi)$ denotes the kernel of $\chi$. Let $\textrm{cod}(G)$ be the set of all codegrees of $G$. Qian conjectured that if $G$ is a finite group and $H$ is a finite nonabelian simple group such that $\textrm{cod}(G)=\textrm{cod}(H)$, then $G\cong H.$ This conjecture has been verified for alternating groups, all sporadic simple groups, all finite simple exceptional groups of Lie type and finite simple projective special linear groups (see \cite{Tong24} and the references therein).

\medskip
We now describe our approach to the proof of Huppert's Conjecture. In \cite{Hupp}, Huppert proposed a $5$-step strategy to verify his conjecture and this has been used successfully to verify the conjecture for many finite simple groups. In \cite{BTZ}, we modified his method to verify the conjecture for alternating groups. We use this method and the data from \cite{Lubweb} to prove the main theorem.

Suppose that $G$ is a finite group and $H$ is a finite nonabelian simple group such that $\cd(G)=\cd(H)$. To verify Huppert's Conjecture for  $H$, we need to prove the following.

\smallskip
{\bf Step~1:} Show that $G'=G''$.

\smallskip
{\bf Step~2:}  If $G'/M$ is a chief factor of $G$,  then $G'/M\cong H$.

\smallskip
{\bf Step~3:}  Show that $M=1$.

\smallskip
{\bf Step~4:} Show that $G\cong G'\times C_G(G')$.

\smallskip

\smallskip
To verify Steps~$1$ and $2$, we follow the argument developed by Huppert. For Step~3, we  can reduce to the case when $L$ is a quotient of $G'$ and $N$ is a minimal normal elementary abelian subgroup of $L$ such that $L/N\cong H$ and  either $\Centralizer_L(N)=N$ or $\Centralizer_L(N)=L$ (see the proof of Proposition \ref{prop:step3}).  For the first possibility, we use a result due to Guralnick and Tiep \cite{Guralnick} on the non-coprime $k(GV)$-problem to obtain a contradiction. If the latter holds, then $L$ is a quasi-simple group.  This and Step 4 can now be dealt with using the data from \cite{Lubweb}.

The paper is organized as follows. We collect some results needed to verify Steps $1,2$ and $3$ in Section \ref{sec2}. In Section \ref{sec3}, we present all the information about the character degrees of the finite simple exceptional groups of Lie type that we need for the proof of the main theorem. We  verify Steps 1--4  in Sections \ref{sec4}--\ref{sec7}, respectively.

\begin{notation} If $n$ is an integer, then we denote by
$\pi(n)$ the set of all prime divisors of $n.$ If $G$ is a group, then we
write $\pi(G)$ to denote the set of all
prime divisors of the order of $G.$ If $p$ is a prime and $n>1$ is an integer, then we write $n_p$ for the largest power of $p$ that divides $n$. Let $\rho(G)=\cup_{\chi\in
\Irr(G)}\pi(\chi(1))$ be the set of all primes which divide
some irreducible character degrees of $G.$ The largest character
degree of $G$ is denoted by $b(G).$ For a group $G,$ let $k(G)$
be the number of conjugacy classes of $G.$
 If $N\unlhd G$ and $\theta\in
\Irr(N),$ then the inertia group of $\theta$ in $G$ is
denoted by $I_G(\theta).$ The set of all irreducible constituents of
$\theta^G$ is denoted by $\Irr(G|\theta).$ Finally, we
denote by $\Phi_n=\Phi_n(q)$ the cyclotomic polynomial in variable
$q.$ Other notation is standard.
\end{notation}

\section{Preliminaries}\label{sec2}

We collect various results that are needed for the proofs of the main theorem.
We start with the following lemmas  which will be used to verify Steps $2$ and $3.$

\begin{lemma}\label{lem5} If $S$ is
a nonabelian simple group, then there exists a nontrivial irreducible character $\theta$ of $S$
that extends to ${\Aut}(S).$ Moreover the following hold:

$(i)$ If $S$ is an alternating group of degree at least $7,$ then $S$ has two
characters of consecutive degrees $n(n-3)/2$ and $(n-1)(n-2)/2$ that extend to ${\Aut}(S).$

$(ii)$ If $S$ is a sporadic simple group or the Tits group, then $S$ has two nontrivial irreducible
characters which   extend to ${\Aut}(S)$ and have coprime degrees.  These characters are listed in Table \ref{Tab2}, 

$(iii)$ If $S$ is a simple group of Lie type then the Steinberg character $\St_S$ of $S$ of degree
$|S|_p$ extends to ${\Aut}(S).$
\end{lemma}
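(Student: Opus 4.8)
The overall existence claim will follow at once from parts (i)--(iii) together with the classification of finite simple groups: every nonabelian simple group is an alternating group, a sporadic group (or the Tits group), or a simple group of Lie type, where the small cases $\Alt_5$ and $\Alt_6$ not covered by (i) are treated as groups of Lie type via the isomorphisms $\Alt_5\cong\PSL_2(4)$ and $\Alt_6\cong\PSL_2(9)$ and hence fall under (iii). So the plan is to prove the three parts separately.

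For (i), I would work with the character theory of $\Sym_n$ indexed by partitions of $n$. The idea is to identify the two degrees with the irreducible characters labelled by the partitions $(n-2,2)$ and $(n-2,1,1)$. A direct hook-length computation gives $\chi_{(n-2,2)}(1)=n(n-3)/2$ and $\chi_{(n-2,1,1)}(1)=(n-1)(n-2)/2$, and subtracting shows these degrees are consecutive. Since $n\ge 7$, we have $\Aut(\Alt_n)=\Sym_n$, so it suffices to check that each character restricts irreducibly to $\Alt_n$; this holds precisely because neither partition is self-conjugate (their conjugates are $(2,2,1^{n-4})$ and $(3,1^{n-3})$, which differ from the originals for $n\ge7$). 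Hence both restrictions are irreducible characters of $\Alt_n$ that extend to $\Sym_n=\Aut(\Alt_n)$.

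For (ii), the verification is a finite check over the $26$ sporadic groups and the Tits group using the \texttt{ATLAS}. Here $\Out(S)$ is trivial or cyclic of order $2$. When $\Out(S)=1$ every character extends trivially, so any two nontrivial degrees of coprime value suffice. When $|\Out(S)|=2$, I would select from the character table two nontrivial irreducible characters of coprime degree that are invariant under the outer automorphism (the \texttt{ATLAS} records which characters are fixed or fused); invariance is enough to guarantee extension because $\Aut(S)/S$ is cyclic. Inspecting each table produces the required pair.

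For (iii), the key point is that the Steinberg character $\St_S$ is the unique irreducible character of $S$ of degree $|S|_p$, where $p$ is the defining characteristic. Uniqueness forces $\St_S$ to be fixed by every automorphism of $S$, so it is $\Aut(S)$-invariant. I would then invoke the standard fact that the Steinberg character of a finite simple group of Lie type actually extends to its full automorphism group, a consequence of the compatibility of the Steinberg module with the diagonal, field, and graph automorphisms. The main obstacle lies here: invariance alone yields an extension only when $\Aut(S)/S$ is cyclic, whereas for types such as $\mathrm{D}_4$ the outer automorphism group is non-cyclic, so one genuinely needs the extendability theorem for $\St_S$ rather than a soft cyclic-quotient argument. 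Once this is granted, all three parts---and with them the opening statement---are established.
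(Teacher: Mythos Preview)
Your proposal is correct and follows essentially the same mathematics as the paper. The paper's own proof is simply a citation---``This is essentially \cite[Theorems~2,3,4]{Bianchi}''---together with the remark that for the sporadic groups and the Tits group a specific table of coprime-degree pairs (the paper's Table~\ref{Tab2}) is used in place of the pairs in \cite{Bianchi}. Your sketch for (i) via the partitions $(n-2,2)$ and $(n-2,1,1)$, the \texttt{ATLAS} check for (ii), and the invariance-plus-extension argument for the Steinberg character in (iii) are precisely what underlies the cited theorems, so the two approaches coincide in substance; yours is just more self-contained while the paper outsources the work to the reference.
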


\begin{proof} This is essentially \cite[Theorems~$2,3,4$]{Bianchi}. However for sporadic simple groups and the Tits
group, we choose different pairs of coprime degrees from those in \cite{Bianchi}. 
\end{proof}
Lemma \ref{lem5} will be used together with the following extension result.
\begin{lemma}\label{lem:ext}
Let $N$ be a minimal normal subgroup of a finite group $G$ such  that $N\cong S^k,$ where $S$ is a nonabelian simple group. If $\theta\in \Irr(S)$ extends to ${\rm{Aut}}(S),$ then $\theta^k\in \Irr(N)$ extends to $G.$
\end{lemma}

\begin{proof}
This is Lemma $5$ in \cite{Bianchi}.
\end{proof}

We also need the following number theoretic result. It is used to show that a certain character degree of the simple
exceptional groups of Lie type cannot be written as a proper nontrivial power.

\begin{lemma}\label{lem7} If $x,y,m\ge 2$ are integers and $x$ is a prime power, then the equation $x^2+x+1=y^m$ has no
 solution.
\end{lemma}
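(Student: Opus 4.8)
The plan is to reduce the exponent, then pass to the Eisenstein integers $\mathbb{Z}[\omega]$ (with $\omega$ a primitive cube root of unity) and exploit the factorization $x^2+x+1=(x-\omega)(x-\overline{\omega})$. First I would observe that $x^2<x^2+x+1<(x+1)^2$ for $x\ge 2$, so $x^2+x+1$ lies strictly between consecutive squares and is never a perfect square; hence $m$ cannot be even, and choosing an odd prime $p\mid m$ and replacing $y$ by $y^{m/p}$ reduces us to $m=p$ an odd prime. Next I would dispose of the case $3\mid x^2+x+1$, which occurs exactly when $x\equiv 1\pmod 3$: writing $x=1+3t$ gives $x^2+x+1=3(1+3t+3t^2)$ with $1+3t+3t^2\equiv1\pmod3$, so $v_3(x^2+x+1)=1$, which is not a $p$-th power for $p\ge2$. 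Thus from now on $3\nmid y$.

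In $\mathbb{Z}[\omega]$, a PID with unit group $\langle-\omega\rangle$ of order $6$, the two factors differ by $\overline{\omega}-\omega=-\sqrt{-3}$, so $\gcd(x-\omega,x-\overline{\omega})$ divides the prime above $3$; since $3\nmid x^2+x+1$ they are coprime, and as their product is $y^p$ each is a unit times a $p$-th power, say $x-\omega=\eta\,\beta^p$ with $\eta$ a unit and $\beta=a+b\omega$. For $p\ge5$ we have $\gcd(p,6)=1$, so every unit is a $p$-th power and we may absorb $\eta$ to get $x-\omega=\beta^p$. Writing $U_k=(\beta^k-\overline{\beta}^k)/(\beta-\overline{\beta})\in\mathbb{Z}$ for the associated Lucas sequence and using $\beta-\overline{\beta}=b\sqrt{-3}$, comparison of the coefficient of $\omega$ gives $b\,U_p=-1$, whence $b=\pm1$ and $|U_p|=1$. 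Here $N(\beta)=a^2-ab+b^2=y\ge2$ (otherwise $\beta$ would be a unit, contradicting $N(x-\omega)=x^2+x+1\ge7$), so the sequence is nondegenerate; a term $U_p=\pm1$ has no prime divisor and hence no primitive prime divisor, which is impossible by the primitive-divisor theorem of Bilu--Hanrot--Voutier (alternatively one checks directly that $|U_p|$ grows, or simply cites the known complete solution of $x^2+x+1=y^p$). So no solution arises for $p\ge5$.

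It remains to treat $p=3$, which is the decisive case. Now the unit group modulo cubes is nontrivial, so $\eta$ ranges over $\{1,\omega,\omega^2\}$ and I must analyze $x-\omega=\omega^{\,j}\beta^3$ for $j\in\{0,1,2\}$. Expanding $\beta^3=(a^3+b^3-3ab^2)+3ab(a-b)\omega$ and matching coordinates, the case $j=0$ forces $3ab(a-b)=-1$, impossible modulo $3$, while the cases $j=1,2$ reduce (after eliminating $x$) to the cubic Thue equation $a^3-3a^2b+b^3=\pm1$ together with $x=\pm3ab(a-b)+\mathrm{const}$. Listing the finitely many integer solutions of this Thue equation, the only resulting value with $x\ge2$ is $x=18$, $y=7$, corresponding to $18^2+18+1=7^3$. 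Since $18=2\cdot3^2$ is not a prime power, the hypothesis excludes it, and the lemma follows.

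The hard part is exactly this $p=3$ analysis: the equation genuinely possesses the solution $x=18$, so no purely structural argument can succeed, and the prime-power hypothesis is precisely what is needed to discard it; a secondary technical point is guaranteeing $|U_p|\ne1$ for $p\ge5$, which is where a Diophantine input (the primitive-divisor theorem, or equivalently the explicit growth of the Lucas sequence) enters. I would therefore expect the cleanest write-up to be self-contained for the reductions and for $p=3$, and to quote either Bilu--Hanrot--Voutier or the known resolution of $x^2+x+1=y^p$ to close the $p\ge5$ case.
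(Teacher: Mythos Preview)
Your approach is essentially correct and arrives at the same endpoint as the paper—namely that the only solution of $x^2+x+1=y^m$ with $x,y,m\ge 2$ is $(x,y,m)=(18,7,3)$, which is then excluded because $18$ is not a prime power—but your route is far more elaborate than the paper's. The paper simply rewrites the equation as $(x^3-1)/(x-1)=y^m$, recognizes it as the $n=3$ case of the Nagell--Ljunggren equation $(x^n-1)/(x-1)=y^m$, and cites the classical result of Nagell and Ljunggren (see Bugeaud--Mih\u{a}ilescu) that the unique solution in this case is $x=18$, $y=7$. That is the entire argument.

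Your proof, by contrast, reconstructs this from scratch via Eisenstein integers, Lucas sequences, and a cubic Thue equation. This has the merit of showing concretely how $x=18$ arises (from the nontrivial orbit of solutions to $a^3-3a^2b+b^3=\pm1$), but it still ultimately rests on external Diophantine results of comparable depth: the primitive-divisor theorem of Bilu--Hanrot--Voutier for $p\ge5$, together with the complete solution of the Thue equation for $p=3$. A couple of points would need tightening in a full write-up: you should verify that the pair $(\beta,\overline{\beta})$ is a genuine Lucas pair (coprimality of $2a-b$ and $y$ follows from $3\nmid y$ and $\gcd(a,b)=1$, but the latter needs a word), and BHV has a finite list of small exceptional indices that must be checked separately. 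You acknowledge these issues yourself by offering ``simply cites the known complete solution of $x^2+x+1=y^p$'' as an alternative—and indeed that citation \emph{is} the paper's proof. So your argument is sound but trades one black box (Nagell--Ljunggren) for two others (BHV and the Thue solver), at considerable extra length.
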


\begin{proof} 
The equation in the lemma can be rewritten as $(x^3-1)/(x-1)=y^m$. This is a special case of the Nagell-Ljunggren equation $(x^n-1)/(x-1)=y^m.$  Nagell and Ljunggren show (see \cite{BM})  that if $n=3$, then the only solution of the equation with $x,y,m\ge 2$  is $(x,y)=(18,7)$. However, $x=18$ is not a prime power.
\end{proof}

We next list the irreducible characters of prime power degrees of the finite nonabelian  simple exceptional groups of Lie type. This is part of \cite[Theorem 1.1]{Malle}.

\begin{lemma}\label{lem:pp}
Let $G$ be one of the finite simple exceptional groups of Lie type listed in \eqref{eqn1} and let $\chi$ be a faithful irreducible character of $G$. Suppose that $\chi(1)=r^d$, where $r$ is a prime. Then $G$ is of characteristic $r$ and $\chi$ is the Steinberg character of $G$, so $\chi(1)=|G|_r$.
\end{lemma}
The next result is crucial to our proof of Step $3.$ Unfortunately,
we are unable to find a simple proof of this fact. The proof given
below is an easy consequence of a deep result due to Guralnick and
Tiep \cite{Guralnick} concerning the non-coprime
$k(GV)$-problem. For a finite group $G,$ let $b(G)$ be the largest
character degree of $G.$ It follows from character theory that
$|G|=\sum_{\chi\in\Irr(G)}\chi(1)^2$ and $k(G)=|\Irr(G)|,$ where
$k(G)$ is the number of conjugacy classes of $G.$ Thus $|G|\leq
k(G)b(G)^2$ and $b(G)^2\leq |G|.$

\begin{lemma}\label{lem8}
Let $L$ be a finite group and let $M$ be a minimal normal elementary abelian subgroup of $L.$ Assume that
$L/M$ is isomorphic to a simple exceptional group of  Lie type of Lie rank at least $4$  and that $C_L(M)=M.$ Then $b(L)>b(L/M).$
\end{lemma}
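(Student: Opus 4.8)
The plan is to reduce the desired inequality to the single bound $k(L)\le |M|$, and then to obtain the latter from the solution of the non-coprime $k(GV)$-problem. First I would record the module-theoretic meaning of the hypotheses: since $M$ is a minimal normal elementary abelian $p$-subgroup of $L$ and $C_L(M)=M$, the conjugation action turns $V:=M$ into a faithful irreducible $\F_p[L/M]$-module, where $|M|=|V|=p^d$ and $L/M\cong H$. Writing $G=L/M\cong H$, we are thus in the situation of a finite simple group $G$ acting faithfully and irreducibly on the elementary abelian group $V$.

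Next I would invoke the two elementary counting inequalities recalled just before the statement, namely $|L|\le k(L)\,b(L)^2$ and $b(H)^2\le |H|-1$, the latter being strict below $|H|$ because the nonabelian simple group $H$ has more than one irreducible character. Since $|L|=|M|\,|H|$, the first gives $b(L)^2\ge |L|/k(L)=|M|\,|H|/k(L)$. Consequently, if one can show that $k(L)\le |M|$, then $b(L)^2\ge |H|>|H|-1\ge b(H)^2$, and hence $b(L)>b(H)=b(L/M)$, as required. So the whole lemma reduces to the inequality $k(L)\le |M|$.

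To establish $k(L)\le |M|$ I would argue as follows. By Clifford theory applied to the abelian normal subgroup $M\unlhd L$, the irreducible characters of $L$ are partitioned according to the $G$-orbits on $\Irr(M)$, and for an orbit representative $\lambda$ the number of irreducible characters of $L$ lying over $\lambda$ equals the number of $\alpha_\lambda$-projective irreducibles of the stabiliser $G_\lambda=I_L(\lambda)/M$, which is at most $k(G_\lambda)$. In the split extension $V\rtimes G$ every $G$-invariant linear character of $V$ extends to its inertia group, so the relevant cocycle is trivial and this number is exactly $k(G_\lambda)$; summing over orbits gives $\sum_\lambda k(G_\lambda)=k(V\rtimes G)$. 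Hence for the possibly non-split extension $L$ we obtain $k(L)\le \sum_\lambda k(G_\lambda)=k(V\rtimes G)$, and it suffices to bound $k(V\rtimes G)\le |V|$. This last inequality is precisely the $k(GV)$-problem, and for $G=H$ a simple exceptional group of Lie type acting faithfully and irreducibly it follows from the deep results of Guralnick and Tiep \cite{Guralnick} on the non-coprime $k(GV)$-problem.

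The main obstacle is this last step: confirming that every relevant pair $(H,V)$ genuinely lies within the scope of the Guralnick--Tiep theorem, i.e.\ that none of $\textrm{F}_4(q),\ {}^2\textrm{E}_6(q),\ \textrm{E}_6(q),\ \textrm{E}_7(q),\ \textrm{E}_8(q)$ carrying a faithful irreducible module occurs among the finite list of exceptional configurations in which $k(GV)$ can exceed $|V|$. This is exactly where the hypothesis that $H$ has Lie rank at least $4$ enters: it keeps $H$ (whose order is then large) away from the small groups that constitute the exceptions, so that the clean bound $k(GV)\le |V|$ applies unconditionally. Once that is secured, the remainder is the elementary manipulation above.
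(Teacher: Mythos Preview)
Your proposal is correct and follows essentially the same route as the paper: reduce to the inequality $k(L)\le k(V\rtimes G)$ (which the paper obtains by citing \cite[Lemma~2.4]{Guralnick}, while you reprove it via Clifford theory and the bound on the number of $\alpha$-regular classes), and then invoke the Guralnick--Tiep results on the non-coprime $k(GV)$-problem. The only cosmetic difference is that the paper quotes the sharper bound $k(HM)\le |M|/2$ from \cite[Theorems~1.4,~1.5]{Guralnick} and contradicts $b(H)^2\le |H|$, whereas you use the weaker bound $k(HM)\le |M|$ and compensate with the strict inequality $b(H)^2\le |H|-1$; both arrangements yield $b(L)>b(H)$.
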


\begin{proof}

Let $H\cong L/M$ be a finite simple exceptional group of Lie type of Lie rank at least $4.$ By way of
contradiction, assume that $b(L)\leq b(L/M)=b(H).$  Since $M$ is an abelian normal subgroup of $L$, the conjugation action of $L$ on $M$ induces an action of $H$ on $M$. Using this action, we form the semidirect product $HM$.  By \cite[Lemma~2.4]{Guralnick}, $k(L)\leq k(HM).$ Now
$$|L|=\sum_{\chi\in\Irr(L)}\chi(1)^2\leq k(L)b(L)^2\leq k(HM)b(H)^2. $$
As $H$ is a finite simple exceptional group of Lie type of Lie rank at least $4,$ and $H$ acts irreducibly on $M$. By  \cite[Theorems~1.4,~1.5]{Guralnick},  $k(HM)\leq |M|/2.$
Moreover, $|L|=|H|\cdot |M|$ and so the above inequality becomes
$$|L|=|H|\cdot |M|\leq |M|b(H)^2/2. $$
Hence $|H|\leq b(H)^2/2<b(H)^2,$ a contradiction. Thus  $b(L)>b(L/M)$ as wanted.
\end{proof}

To verify Step 3 for finite simple classical groups, we need  the following result.
\begin{prob}
Let $G$ be a finite perfect group and let $M$ be a minimal normal elementary abelian subgroup of $G$ such that $G/M$ is a finite simple classical group. If $C_G(M)=M$, then $G$ has an irreducible character $\chi$ such that $\chi(1)$ divides no character degree of $G/M.$
\end{prob}
It is possible that $b(G)>b(G/M)$ in the above situation. To finish Step $3$, we need to show that if $L$ is a finite quasi-simple group with $L/Z(L)$ a finite simple classical group, then some character degree of $L$ divides no character degree of $L/Z(L)$.

Finally, we need the following result due to Zsigmondy.
\begin{lemma}\emph{(\cite[Theorems ~5.2.14,~5.2.15]{KL}).}\label{Zsigmondy} Let $q$ and $n$ be integers with $q\geq 2,$ and $n\geq 3.$
Suppose that $(q,n)\neq (2,6).$ Then $q^n-1$ has a prime divisor
$\ell$ such that $\ell\equiv 1$ \emph{(mod $n$)} and $\ell$ does not
divide $q^k-1$ for any $k<n.$ Moreover if $\ell\mid q^k-1$ for some
integer $k,$ then $n\mid k.$
\end{lemma}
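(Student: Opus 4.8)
The plan is to recast the statement in terms of \emph{primitive prime divisors} and to establish their existence by means of cyclotomic polynomials. Call a prime $\ell$ a primitive prime divisor of $q^n-1$ if $\ell\mid q^n-1$ but $\ell\nmid q^k-1$ for every $k<n$. The two numerical assertions of the lemma follow at once from elementary order considerations: if $\ell$ is such a prime and $d=\mathrm{ord}_\ell(q)$ denotes the multiplicative order of $q$ modulo $\ell$, then $\ell\mid q^k-1$ is equivalent to $d\mid k$, so primitivity forces $d=n$. This immediately gives the ``moreover'' clause ($\ell\mid q^k-1\Rightarrow n\mid k$), while Fermat's little theorem yields $d\mid\ell-1$, hence $n\mid\ell-1$, that is $\ell\equiv 1\pmod n$. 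Thus everything reduces to proving that a primitive prime divisor of $q^n-1$ exists whenever $n\ge 3$ and $(q,n)\ne(2,6)$.

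For existence I would use the factorisation $q^n-1=\prod_{d\mid n}\Phi_d(q)$ and concentrate on the top factor $\Phi_n(q)$. The key structural input is the standard lemma that any prime $\ell$ dividing $\Phi_n(q)$ either is a primitive prime divisor (equivalently $\mathrm{ord}_\ell(q)=n$) or else divides $n$; and in the latter case $\ell$ is forced to be the \emph{largest} prime factor $p$ of $n$ and divides $\Phi_n(q)$ to the first power only. Indeed, writing $d=\mathrm{ord}_\ell(q)$ one has $n=d\,\ell^{j}$ with $j\ge 1$ and $d\mid\ell-1$, so $d<\ell$ and every prime factor of $d$ is smaller than $\ell$, whence $\ell$ is the largest prime of $n$. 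Consequently $\Phi_n(q)$ fails to furnish a primitive prime divisor precisely when $\Phi_n(q)=p$ (it cannot equal $1$ for $n\ge 3$, $q\ge 2$), so it suffices to prove the inequality $\Phi_n(q)>p$.

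This last inequality is where the real work lies. Writing $\Phi_n(q)=\prod_\zeta(q-\zeta)$ over the primitive $n$-th roots of unity and using $|q-\zeta|\ge q-1$ gives the crude bound $\Phi_n(q)\ge(q-1)^{\varphi(n)}$, which already settles every pair with $q\ge 3$: there $(q-1)^{\varphi(n)}\ge 2^{\varphi(n)}$ outgrows $p\le n$ with at most a handful of tiny exceptions checked directly. The genuinely delicate regime is $q=2$, where this bound degenerates to $1$ and must be replaced by the sharper estimate obtained from keeping the factors $q-\zeta$ with $\zeta$ away from the positive real axis. The main obstacle is exactly this case analysis for small $n$ when $q=2$, since it is here that the lone exception $(q,n)=(2,6)$ surfaces: one computes $\Phi_6(2)=3$, the largest prime dividing $6$, so $2^6-1=63=3^2\cdot 7$ has only the non-primitive prime $3$ (order $2$) and the prime $7$ (order $3$), and no prime of order $6$. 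Verifying that every other small pair admits a primitive prime divisor while isolating $(2,6)$ as the sole failure is the heart of the argument; the remaining large-$n$ cases are dispatched uniformly by the lower bound on $\Phi_n(q)$.
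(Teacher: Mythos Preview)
Your sketch is essentially correct and follows the classical Birkhoff--Vandiver route to Zsigmondy's theorem via the top cyclotomic factor $\Phi_n(q)$. The paper, however, does not prove this lemma at all: it is quoted verbatim from \cite[Theorems~5.2.14,~5.2.15]{KL} and used as a black box throughout, so there is no ``paper's own proof'' to compare with. What you have supplied is a genuine (and standard) proof where the paper simply appeals to the literature.

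A couple of small points on your sketch. First, the claim that a non-primitive prime $\ell\mid\Phi_n(q)$ satisfies $\ell\,\|\,\Phi_n(q)$ is exactly what makes the reduction to ``$\Phi_n(q)>p$'' work, and it deserves a line of justification (lifting-the-exponent for odd $\ell$, and a direct check that $v_2(q^{2^{k-1}}+1)=1$ when $n=2^k$ and $q$ is odd). Second, for $q\ge 3$ the bound $(q-1)^{\varphi(n)}\ge 2^{\varphi(n)}$ does dominate the largest prime factor $p\le n$ in all cases with $n\ge 3$, but you should either state the easy inequality $2^{\varphi(n)}>p$ explicitly or list the few $n$ you checked by hand. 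The $q=2$ endgame you flag as ``the heart of the argument'' is indeed where the work lies; pairing conjugate factors to get $\Phi_n(2)>2^{\varphi(n)/2}/c$ for a small constant $c$ (or invoking the known inequality $\Phi_n(2)\ge 2^{\varphi(n)}/(\mathrm{e}\,n)$) handles all but finitely many $n$, and the remaining small $n$ are dispatched by direct computation, isolating $(2,6)$ as the unique failure.
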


Such a prime $\ell$ is a \emph{primitive prime divisor} (ppd for short) and
is denoted by $\ell_n(q).$ If $n$ is odd and $(q,n)\neq (2,3)$
then $q^{2n}-1$ has a primitive prime divisor denoted by $\ell_{-n}(q).$ In terms of the
cyclotomic polynomials, as $q^n-1=\prod_{k\mid n}\Phi_k,$ if $\ell$ is a primitive prime divisor of $q^n-1,$ then
$\ell\mid \Phi_n$ but $\ell\nmid \Phi_k$ for any $k<n.$ In this
situation, we also say that $\ell$ is a primitive prime divisor of
$\Phi_n.$


\begin{table}[h]
 \begin{center}
  \caption{Character degrees of sporadic simple groups and the Tits group} \label{Tab2}
  \begin{tabular}{lll|llr}
   \hline
   Group  & Character &Degree&Group&Character&Degree\\ \hline
   $\textrm{M}_{11}$&$\chi_8$&$2^2\cdot 11$&$\textrm{O'N}$&$\chi_2$&$2^6\cdot 3^2\cdot 19$\\
   &$\chi_9$&$3^2\cdot 5$&&$\chi_{19}$&$7^3\cdot 11\cdot 31$\\

   $\textrm{M}_{12}$&$\chi_7$&$2\cdot 3^3$&$\textrm{Co}_3$&$\chi_3$&$11\cdot 23$\\
   &$\chi_8$&$5\cdot 11$&&$\chi_6$&$2^7\cdot 7$\\

   $\textrm{J}_1$&$\chi_5$&$2^2\cdot 19$&$\textrm{Co}_2$&$\chi_3$&$11\cdot 23$\\
   &$\chi_6$&$7\cdot 11$&&$\chi_{22}$&$3^6\cdot 5^3$\\

   $\textrm{M}_{22}$&$\chi_2$&$3\cdot 7$&$\textrm{Fi}_{22}$&$\chi_{56}$&$2^{17}\cdot 11$\\
   &$\chi_5$&$5\cdot 11$&&$\chi_{57}$&$3^9\cdot 7\cdot 13$\\

   $\textrm{J}_2$&$\chi_6$&$2^2\cdot 3^2$&$\textrm{HN}$&$\chi_{10}$&$3^4\cdot 11\cdot 19$\\
   &$\chi_{13}$&$5^2\cdot 7$&&$\chi_{45}$&$2^{10}\cdot 5^5$\\

   $\textrm{M}_{23}$&$\chi_3$&$3^2\cdot 5$&$\textrm{Ly}$&$\chi_7$&$2^8\cdot 7\cdot 67$\\
   &$\chi_9$&$11\cdot 23$&&$\chi_{50}$&$3\cdot 5^6\cdot 31\cdot 37$\\

   $\textrm{HS}$&$\chi_2$&$2\cdot 11$&$\textrm{Th}$&$\chi_2$&$2^3\cdot 31$\\
   &$\chi_7$&$5^2\cdot 7$&&$\chi_7$&$5^3\cdot 13\cdot 19$\\

   $\textrm{J}_3$&$\chi_6$&$2^2\cdot 3^4$&$\textrm{Fi}_{23}$&$\chi_4$&$13\cdot 17\cdot 23$\\
   &$\chi_{13}$&$5\cdot 17\cdot 19$&&$\chi_{94}$&$2^{18}\cdot 5^2\cdot 7\cdot 11$\\

   $\textrm{M}_{24}$&$\chi_7$&$2^2\cdot 3^2\cdot 7$&$\textrm{Co}_1$&$\chi_3$&$13\cdot 23$\\
   &$\chi_8$&$11\cdot 23$&&$\chi_{17}$&$2\cdot 5^4\cdot 7^2\cdot 11$\\

   $\textrm{McL}$&$\chi_2$&$2\cdot 11$&$\textrm{J}_4$&$\chi_2$&$31\cdot 43$\\
   &$\chi_{14}$&$3^6\cdot 7$&&$\chi_{11}$&$2^3\cdot 3^2\cdot 23\cdot 29\cdot 37$\\

   $\textrm{He}$&$\chi_9$&$3\cdot 5^2\cdot 17$&$\textrm{Fi}_{24}'$&$\chi_2$&$13\cdot 23\cdot 29$\\
   &$\chi_{15}$&$2^7\cdot 7^2$&&$\chi_6$&$5^2\cdot 7^3\cdot 11\cdot 17$\\

   $\textrm{Ru}$&$\chi_5$&$3^3\cdot 29$&$\textrm{B}$&$\chi_2$&$3\cdot 31\cdot 47$\\
   &$\chi_{20}$&$2^2\cdot 5^3\cdot 7\cdot 13$&&$\chi_{119}$&$2^{39}\cdot 11\cdot 19\cdot 23$\\

   $\textrm{Suz}$&$\chi_2$&$11\cdot 13$&$\textrm{M}$&$\chi_2$&$47\cdot 59\cdot 71$\\
   &$\chi_{43}$&$2^{10}\cdot 3^5$&&$\chi_{16}$&$5^9\cdot 7^6\cdot 11^2\cdot 17\cdot 19$\\

   ${}^2\textrm{F}_4(2)'$&$\chi_8$&$5^2\cdot 13$&$$&$$&$$\\
   &$\chi_{20}$&$2^6\cdot 3^3$&&$$&$$\\
   \hline
\end{tabular}
\end{center}
\end{table}


\section{Character degrees of simple exceptional groups of Lie type}\label{sec3}

We now collect the results concerning the character degrees of
exceptional groups of Lie type that we need for the proof of Theorem \ref{th:main}. We first
consider the following setup (see, for example, \cite{MTest}).  Let $\mathcal{G}$ be a simple simply connected algebraic group  defined
over the algebraic closure of a finite field ${\F}_p$ of characteristic $p$ and let $F:
\mathcal{G}\longrightarrow \mathcal{G}$ be a Steinberg endomorphism such that the fixed point group $L=\mathcal{G}^F$ is
finite and $H\cong L/Z(L)$  is a finite simple group. Let $\mathcal{G}^*$ be a group dual to $\mathcal{G}$
with a dual Frobenius map $F^*$ and let $H^*=({\mathcal{G}^*})^{F^*}.$ 
We use the notation $H_{sc}$ for $L$ and $H_{ad}$ for $H^*.$

According to Deligne - Lusztig theory  (see  \cite{Car85,DM,GM}), there is a
natural partition $$\Irr(L)=\bigcup_{\substack{s\in H^*/\sim\\s \:\text{semisimple}}}
\mathcal{E}(L,s)$$ of $\Irr(L),$ where the union runs over $H^*$-conjugacy classes of semisimple
elements. The irreducible characters of $L$ in $\mathcal{E}(L,1)$ are {\em unipotent
characters} of $L.$ The unipotent character degrees of simple exceptional groups of Lie type are
given explicitly in \cite[13.9]{Car85}.  Moreover for each $s,$ there is a bijection map $$\psi_s:
\mathcal{E}(L,s)\longrightarrow \mathcal{E}(\Centralizer_{H^*}(s),1)$$ such that for each
$\chi\in\mathcal{E}(L,s),$ the degree of $\chi$ is given by
$$\chi(1)=|H^*:\Centralizer_{H^*}(s)|\psi_s(\chi)(1)$$ (see for example \cite{Malle99}). The {\em semisimple
character} corresponding to the conjugacy class with representative $s$ is the character $\chi_s$
such that $\psi_s(\chi_s)$ is the principal character of $\Centralizer_{H^*}(s).$ In this case
$\chi_s(1)=|H^*:\Centralizer_{H^*}(s)|.$

By results of Lusztig (see \cite{Malle08}), we know that all the unipotent characters of $L$ are trivial on the center
of $L$; therefore,   they are exactly all the unipotent characters of $H\cong L/Z(L).$ Moreover, unipotent characters of $H$ extend to $\Aut(H),$ apart
from some exceptions (see \cite[Theorem~2.5]{Malle08}). This fact will be used frequently in the
proofs of Steps $2$ and $3.$
Some results on semisimple characters of $L$ can be found in \cite[Lemma~2.5]{Dolfi}.
In Table \ref{Tab1}, we list the $p$-parts of the degrees of some unipotent characters of finite simple groups of Lie type in characteristic $p$. Note that we follow the ATLAS notation  \cite{ATLAS} for finite simple groups of Lie type.

\begin{table}
 \begin{center}
  \caption{Some unipotent characters of finite simple groups of Lie type} \label{Tab1}
  \begin{tabular}{llr}
   \hline
   $S=S(p^b)$  & Symbol &$p$-part of degree\\ \hline
   $\textrm{L}_n^\epsilon(p^b),n\geq 3$ & $(1^{n-2},2)$&$p^{b(n-1)(n-2)/2}$\\
   $\textrm{S}_{2n}(p^b),p=2$&$\binom{0\:1\:2\:\cdots\:n-2\:n-1\:n}{\:\:1\:2\cdots\:n-2}$&$2^{b(n-1)^2-1}$\\

   $\textrm{S}_{2n}(p^b),p>2$ && $p^{b(n-1)^2}$\\
   $\textrm{O}_{2n+1}(p^b),p>2$  &$\binom{0\:1\:2\:\cdots\:n-2\:n-1\:n}{\:\:1\:2\cdots\:n-2}$& $p^{b(n-1)^2}$\\
   $\textrm{O}_{2n}^+(p^b)$&$\binom{0\:1\:2\:\cdots\:n-3\:n-1}{\:1\:2\:3\cdots\:n-2\:n-1}$&$p^{b(n^2-3n+3)}$\\
   $\textrm{O}_{2n}^-(p^b)$&$\binom{0\:1\:2\cdots\:n-2\:n}{1\:2\cdots\:n-2}$&$p^{b(n^2-3n+2)}$\\
   ${}^3\textrm{D}_4(p^b)$&$\phi_{1,3}''$&$p^{7b}$\\
   $\textrm{F}_4(p^b)$&$\phi_{9,10}$&$p^{10b}$\\
   ${}^2\textrm{F}_4(q^2)$&${}^2B_2[a],\epsilon$&$\frac{1}{\sqrt{2}}q^{13}$\\
   $\textrm{E}_6(p^b)$&$\phi_{6,25}$&$p^{25b}$\\
   ${}^2\textrm{E}_6(p^b)$&$\phi_{2,16}''$&$p^{25b}$\\
   $\textrm{E}_7(p^b)$&$\phi_{7,46}$&$p^{46b}$\\
   $\textrm{E}_8(p^b)$&$\phi_{8,91}$&$p^{91b}$\\\hline
\end{tabular}
\end{center}
\end{table}

The results in this section are based on the data given in \cite{Lubweb}. 
From this, we
can extract all the information about the character degrees of both $L$ and $H^*,$ the simply
connected and adjoint versions of the simple exceptional group of Lie type $H.$ Note that if
$d:=|Z(L)|=|H^*:H|$ is trivial, then all these versions of $H$ are isomorphic and we obtain
the character degrees of the simple group $H.$ Otherwise, we only know that $\cd(H)\subseteq
\cd(L)$ and every character degree of $H$ divides some degree of $H^*.$

We are now ready to list the results on the character degree sets of finite simple exceptional groups of
Lie type.  From now on we assume that $q=p^f$ is a power of a prime $p,$ where
$f\geq 1$ and $H$ is isomorphic to one of the following simple groups:
\begin{equation}\label{eqn1}
\textrm{F}_4(q),~{}^2\textrm{E}_6(q),~\textrm{E}_6(q),~\textrm{E}_7(q),~\textrm{E}_8(q).
\end{equation} 
Since  $H\cong \textrm{F}_4(2)$ has been
handled in \cite{HT}, we assume that $q\ge 3$ when $H\cong \textrm{F}_4(q).$ As a demonstration, we  give a detailed proof for the case  $H\cong {}^2{\rm E}_6(q)$. The remaining cases are similar and we skip their proofs.


\begin{lemma}\label{2E6} Let $H$ be the simple exceptional group ${}^2{\rm E}_6(q)$ and let $\ell_i,i=1,2,3,4$ be
 primitive prime divisors of $\Phi_{18},\Phi_{12},\Phi_8$ and $\Phi_{10},$ respectively. Let $x\neq q^{36}$
 be a nontrivial character degree of $L\cong {}^2{\rm E}_6(q)_{sc}.$ Then the following hold.

\begin{enumerate}[$(i)$]

\item If $(\ell_1\ell_2,x)=1,$ then $x$ is the degree of unipotent
characters labeled by the symbols ${}^2E_6[\theta^i],i=1,2$  with
degree
$\frac{1}{3}q^{7}\Phi_1^4\Phi_2^6\Phi_4^2\Phi_8\Phi_{10}.$

\item If $(\ell_2\ell_3,x)=1,$ then $x$ is the degree of  the
unipotent characters labeled by the symbols $\phi_{8,3}'$ or
$\phi_{8,9}''$ with degree
$\frac{1}{2}q^{3}\Phi_2^4\Phi_6^2\Phi_{10}\Phi_{18}
~\mbox{or~}~\frac{1}{2}q^{15}\Phi_2^4\Phi_6^2\Phi_{10}\Phi_{18}
.$

\item We have $(\ell_1\ell_3,x)>1$ and  $(p\ell_1\ell_4,x)>1.$

\item If $q=2$  then
$(p\ell_3\ell_4,x)>1,$ and if $q>2$ and $(p\ell_3\ell_4,x)=1,$ then $x=\Phi_3\Phi_6^2\Phi_{12}\Phi_{18}.$

\item $H$ possesses two semisimple irreducible characters
$\chi_i,i=1,2$ corresponding to the maximal tori with orders
$\Phi_{18}$ and $\Phi_{12}\Phi_6,$  with degrees
$$\Phi_1^4\Phi_2^6\Phi_3^2\Phi_4^2\Phi_6^3\Phi_8\Phi_{10}\Phi_{12},
\Phi_1^4\Phi_2^6\Phi_3^2\Phi_4^2\Phi_6^2\Phi_{8}\Phi_{10}\Phi_{18},$$
respectively, such that no proper multiple of each degree is a degree
of $H.$

\item The $p$-part of $x$ is at most $q^{25}.$

\item The unipotent character  labeled by the symbol $\phi_{2,4}'$
with degree $q\Phi_{8}\Phi_{18}$
 is the
smallest nontrivial character degree of $H.$

\item Assume $x,y\in \cd(L)-\{1,q^{36}\}.$ If $p\neq 3,$ then $(x,y)>1.$ If $p=3$ and $(x,y)=1,$
then $\{x,y\}=\{\frac{1}{3}q^{7}\Phi_1^4\Phi_2^6\Phi_4^2\Phi_8\Phi_{10},~\Phi_3\Phi_6^2\Phi_{12}\Phi_{18}\}$.

\item $L$ has no consecutive degrees.

\item Suppose that $3\mid q+1$. There exists
$\chi\in\Irr(L)$ of degree
$\frac{1}{3}q^{9}\Phi_1^3\Phi_2^4\Phi_3\Phi_4^2\Phi_6\Phi_8\Phi_{10}\Phi_{12}$
with multiplicity $6$ such that $\chi(1)$ divides no degree of $H.$
Similarly $H^*\cong {}^2{\rm E}_6(q)_{ad}$ has an irreducible character $\psi$  with
$\psi(1)=q^9\Phi_1\Phi_3^2\Phi_4^2\Phi_8\Phi_{10}\Phi_{12}\Phi_{18}$
 with multiplicity $1$ such that $\psi(1)$ is not a degree of $H.$

\end{enumerate}
\end{lemma}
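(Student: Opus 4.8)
The plan is to follow the template of Lemma~\ref{F4} throughout. Statements $(i)$--$(vii)$ are obtained by reading off the character degrees of $L\cong {}^2\mathrm{E}_6(q)_{sc}$ and of the adjoint group $H^*$ directly from \cite{Lubweb}: one records which degrees are coprime to each product of the primitive prime divisors $\ell_1,\dots,\ell_4$ of $\Phi_{18},\Phi_{12},\Phi_8,\Phi_{10}$, identifies the two semisimple characters attached to the tori of orders $\Phi_{18}$ and $\Phi_6\Phi_{12}$ in $(v)$, reads off the smallest nontrivial degree $q\Phi_8\Phi_{18}$ in $(vii)$, and bounds the $p$-part by $q^{25}$ in $(vi)$. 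No argument beyond inspection of the data is needed here, exactly as in parts $(i)$--$(vii)$ of Lemma~\ref{F4}.

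The real content is in $(viii)$ and $(ix)$, and here the argument is a verbatim adaptation of the one already given for $\mathrm{F}_4(q)$. For $(viii)$, suppose $x,y\in\cd(L)\setminus\{1,q^{36}\}$ with $(x,y)=1$. The universal relations $(\ell_1\ell_3,x)>1$ and $(p\ell_1\ell_4,x)>1$ of $(iii)$ force each of $x,y$ to be divisible by $\ell_1$ or $\ell_3$, and since $(x,y)=1$ one may assume $\ell_1\mid x$ and $\ell_3\mid y$ (or the symmetric case). Feeding this into the classifications $(i)$, $(ii)$ and $(iv)$ of degrees coprime to the various $\ell_i$ and to $p$, a short case analysis eliminates every possibility except the pair $\{\tfrac{1}{3}q^{7}\Phi_1^4\Phi_2^6\Phi_4^2\Phi_8\Phi_{10},\ \Phi_3\Phi_6^2\Phi_{12}\Phi_{18}\}$ together with $p=3$. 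That $p=3$ is forced is the exact analogue of the gcd computation closing Lemma~\ref{F4}$(viii)$: if $3\nmid q$, then $3\mid q^2-1=\Phi_1\Phi_2$, so a factor $3$ survives in $\tfrac{1}{3}q^{7}\Phi_1^4\Phi_2^6\Phi_4^2\Phi_8\Phi_{10}$, while one of $\Phi_3=q^2+q+1$ and $\Phi_6=q^2-q+1$ is $\equiv 0\pmod 3$, so $3\mid\Phi_3\Phi_6^2\Phi_{12}\Phi_{18}$; hence $3\mid(x,y)$, a contradiction. For $(ix)$, any two consecutive degrees are coprime, and since $q^{36}\pm 1$ do not divide $|H|$ we may take $x=y+1$ with $x,y\in\cd(L)\setminus\{1,q^{36}\}$; then $(viii)$ forces the pair above with $p=3$. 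Comparing $q$-degrees, the first member has $q$-degree $29$ and the second only $16$, so for $q\geq 2$ the larger exceeds the smaller by far more than $1$, and they cannot be consecutive.

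The genuinely new ingredient, and the step I expect to be the main obstacle, is $(x)$, which supplies the Step~$3$ and Step~$4$ data in the one case $3\mid q+1$ in which $L$, $H$ and $H^*$ genuinely differ, since then $|Z(L)|=d=3$. I would exhibit the stated characters directly from \cite{Lubweb}: a character $\chi\in\Irr(L)$ of degree $\tfrac{1}{3}q^{9}\Phi_1^3\Phi_2^4\Phi_3\Phi_4^2\Phi_6\Phi_8\Phi_{10}\Phi_{12}$ lying in a Lusztig series of size $6$, and a character $\psi\in\Irr(H^*)$ of degree $q^{9}\Phi_1\Phi_3^2\Phi_4^2\Phi_8\Phi_{10}\Phi_{12}\Phi_{18}$ occurring once. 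The content is then to verify non-divisibility against the \emph{entire} set $\cd(H)$, and this is where the work lies. One checks that $\chi(1)$, whose total $q$-degree is $36$ with $p$-part exactly $q^{9}$, divides no degree of $H$ by noting that any multiple would have to be a degree of comparable size and that the combination of factors carried by $\chi(1)$, in particular the surviving factor $3$ together with $\Phi_1^3\Phi_2^4$, matches no member of $\cd(H)$ nor any integer multiple of one; the argument for $\psi(1)$ is similar, this time exploiting the presence of $\Phi_{18}$. Unlike $(viii)$--$(ix)$, this has no counterpart in Lemma~\ref{F4}, whose group has trivial centre, so the divisibility bookkeeping against $\cd(H)$ must be carried out afresh and is the delicate part of the lemma.
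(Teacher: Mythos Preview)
Your treatment of $(i)$--$(vii)$ and $(viii)$--$(ix)$ is essentially the paper's: direct inspection of \cite{Lubweb} for the former, and for the latter a case analysis built on $(i)$--$(iv)$ culminating in the same $3\mid\Phi_3\Phi_6$ computation and size comparison. The paper organises $(viii)$ slightly differently, pivoting on whether $(\ell_1\ell_2,x)=1$ rather than on $(\ell_1\ell_3,x)>1$, but this is cosmetic.

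The gap is in $(x)$, and it is exactly where you flagged the difficulty. When $3\mid q+1$ the three groups $L$, $H$, $H^*$ differ, and \cite{Lubweb} gives you only $\cd(L)$ and $\cd(H^*)$, not $\cd(H)$. Your plan ``check that the combination of factors carried by $\chi(1)$ matches no member of $\cd(H)$'' is not executable as stated, because you have no list of members of $\cd(H)$ to match against. The paper's argument supplies the missing bridge. First, one verifies from \cite{Lubweb} that $\chi(1)$ divides no degree of $L$ other than itself; since $\cd(H)\subseteq\cd(L)$, this reduces the claim to showing $\chi(1)\notin\cd(H)$. For that, use Clifford theory for $H\unlhd H^*$ with $|H^*:H|=3$: any $\varphi\in\Irr(H)$ either extends to $H^*$ or induces irreducibly, so $\varphi(1)\in\cd(H^*)$ or $3\varphi(1)\in\cd(H^*)$. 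Hence it is enough to check in \cite{Lubweb} that neither $\chi(1)$ nor $3\chi(1)$ lies in $\cd(H^*)$, which is a finite verification in data you actually possess. The claim for $\psi(1)$ is handled the same way. Without this device your proposed factor-matching argument has no target list to compare against, so part $(x)$ does not go through.
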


\begin{proof} The lists of character degrees of ${}^2{\rm E}_6(q)_{sc}$ and
${}^2{\rm E}_6(q)_{ad}$ are available in \cite{Lubweb}. Statements
$(i)-(vi)$ can be checked easily. For $(vii)$, see \cite{Lubeck1}.  For $(viii),$ suppose that
$x,y\in \cd(L)-\{1,q^{36}\}$ and $(x,y)=1.$ Assume first that
$(\ell_1\ell_2,x)=1.$ By $(i),$ 
$x=\frac{1}{3}q^{7}\Phi_1^4\Phi_2^6\Phi_4^2\Phi_8\Phi_{10}$ and so
$p\ell_3\ell_4\mid x.$ Therefore,
$(p\ell_3\ell_4,y)=1.$ It follows from $(iv)$ that $ q>2$ and
$y=\Phi_3\Phi_6^2\Phi_{12}\Phi_{18}.$ It suffices to show that
$p=3.$ Assume that $p\neq 3.$ Then $q^2-1$ is divisible by $3.$ Now $\Phi_1^4\Phi_2^4=(q^2-1)^4$ is divisible by $3^4$ so
$x=\frac{1}{3}q^7\Phi_1^4\Phi_2^6\Phi_4^2\Phi_8\Phi_{10}$ is
divisible by $3.$ Also
$$\Phi_3\Phi_6=q^4+q^2+1=(q^4-1)+(q^2-1)+3=(q^2-1)(q^2+2)+3$$ is divisible by $3$ and thus $3$
divides both $x$ and $y$ and hence $(x,y)>1,$  a
contradiction. Thus $p=3$ as required.  Hence we
can assume that $(\ell_1\ell_2,x)>1.$ By interchanging the roles of
$x$ and $y,$ we can also assume that $(\ell_1\ell_2,y)>1.$ Without
loss of generality, we assume that $\ell_2\mid x.$ So,
$(\ell_2,y)=1$ hence $\ell_1\mid y$ and thus $(\ell_1,x)=1.$ By
$(iii),$  $\ell_3\mid x$ so $(\ell_3,y)=1$ and
hence $(\ell_2\ell_3,y)=1.$ By $(ii)$, 
$p\ell_1\ell_4\mid y$ and thus $(p\ell_1\ell_4,x)=1,$ which
contradicts $(iii)$. This proves claim $(viii)$.

For $(ix),$ without loss of generality, assume that
$x=y+1.$ By $(vii),$ we can assume that $x>y>1.$ But $q^{36}\pm 1$
do not divide $|L|,$  so $L$ has no such
degree; therefore, we assume that $x,y\in\cd(L)-\{1,q^{36}\}.$
Moreover $(x,y)=1$ and so  by $(viii),$ $p=3$ and
$$\{x,y\}=\{\frac{1}{3}q^7\Phi_1^4\Phi_2^6\Phi_4^2\Phi_8\Phi_{10},\Phi_3\Phi_6^2\Phi_{12}\Phi_{18}\}.$$
Since $q\geq 3,$ 
$$\frac{1}{3}q^7\Phi_1^4\Phi_2^6\Phi_4^2\Phi_8\Phi_{10}>q^{25}>q^{17}>\Phi_3\Phi_6^2\Phi_{12}\Phi_{18}$$
 and thus
$x=\frac{1}{3}q^7\Phi_1^4\Phi_2^6\Phi_4^2\Phi_8\Phi_{10}~\mbox{and}~
y=\Phi_3\Phi_6^2\Phi_{12}\Phi_{18}.$ But then
$$y+1<q^{17}+1<q^{18}<q^{25}<x$$ which is a contradiction as $x=y+1.$

For $(x),$ assume that $3\mid q+1.$ Using \cite{Lubweb},
${}^2{\rm E}_6(q)_{sc}$ possesses an irreducible character $\chi$ with
degree and multiplicity as in $(x).$ Also
$\chi(1)$ divides no other degree of $L.$ Since $\cd(H)\subseteq
\cd(L),$ $\chi(1)$ divides no degree of $H-\{\chi(1)\}.$ Hence it
suffices to show that $\chi(1)$ is not a degree of $H.$ Note that
$H$ is a normal subgroup of index $3$ in ${}^2{\rm E}_6(q)_{ad}$ and so,
for every irreducible character $\varphi$ of $H,$ either
$\varphi(1)$ or $3\varphi(1)$ is a degree of ${}^2{\rm E}_6(q)_{ad}$
depending on whether $\varphi$ is extendible to ${}^2{\rm E}_6(q)_{ad}$ or
not. Therefore, to show that $\chi(1)$ is not a degree of $H,$ we
only need to check that neither $\chi(1)$ nor $3\chi(1)$ is a degree
of ${}^2{\rm E}_6(q)_{ad}.$ Using \cite{Lubweb}, we check that
$\chi(1)$ satisfies these properties, which completes the proof of
our first claim. The second claim follows easily using
\cite{Lubweb}. The proof is now complete.
\end{proof}

\begin{lemma}\label{F4} Let $H\cong {\rm F}_4(q),$ where $q\geq 3,$ and
let $x\in \cd(H)$ with $x\not\in \{1,q^{24}\}$. Let $\ell_i,i=1,2,\cdots,4,$ be primitive prime divisors of
$\Phi_{12},\Phi_8,\Phi_6$ and $\Phi_3,$ respectively.  Then the following hold.

\begin{enumerate}[$(i)$]

\item If $(\ell_1\ell_2,x)=1,$ then
$x=\frac{1}{4}q^4\Phi_1^4\Phi_2^4\Phi_3^2\Phi_6^2.$

\item If   $(\ell_1,x)=1,$ then $p\mid x$ or
$x=\Phi_1^4\Phi_2^4\Phi_3^2\Phi_4^2\Phi_6^2\Phi_8.$

\item If   $(\ell_2,x)=1,$ then $p\mid x$ or $x$ is 
 $\Phi_3\Phi_6\Phi_{12}~(q ~odd)$ or $\Phi_1^4\Phi_2^4\Phi_3^2\Phi_4^2\Phi_6^2\Phi_{12}.$

\item If   $(\ell_3\ell_4,x)=1,$ then $x=\frac{1}{2}q\Phi_4\Phi_8\Phi_{12},$ $q^3\Phi_4^2\Phi_8\Phi_{12},$ $\frac{1}{3}q^4\Phi_1^4\Phi_2^4\Phi_4^2\Phi_{8},$ 
$q^9\Phi_4^2\Phi_{8}\Phi_{12},$ or $\frac{1}{2}q^{13}\Phi_4\Phi_8\Phi_{12}.$

\item There exist $\chi_i\in\Irr(H),i=1,2,$  with 
degrees
$\Phi_1^4\Phi_2^4\Phi_3^2\Phi_4^2\Phi_6^2\Phi_8$ and $\Phi_1^4\Phi_2^4\Phi_3^2\Phi_4^2\Phi_6^2\Phi_{12},$
respectively, such that no proper multiple of each degree is a
degree of $H.$

\item The $p$-part of $x$ is at most $q^{16}$ when $q$ is odd and
at most $\frac{1}{2}q^{13}$ when $q$ is even.

\item The smallest nontrivial character degree of $H$ is
$\Phi_3\Phi_6\Phi_{12}$ when $q$ is odd and 
$\frac{1}{2}q\Phi_1^2\Phi_3^2\Phi_8$ when $q$ is even.

\item Assume $x,y\in \cd(H)-\{1,q^{24}\}.$
If $p\neq 3,$ then $(x,y)>1.$
If $p= 3$ and $(x,y)=1,$ then
$\{x,y\}=\{\Phi_3\Phi_6\Phi_{12},\frac{1}{3}q^4\Phi_1^4\Phi_2^4\Phi_4^2\Phi_{8}\}.$

\item $H$ has no consecutive degrees.

\end{enumerate}
\end{lemma}

\begin{lemma}\label{E6} Let $H$ be the simple exceptional group ${\rm E}_6(q)$ and let $\ell_i,i=1,2,3,4$
be primitive prime divisors of $\Phi_{12},\Phi_{9},\Phi_8$ and $\Phi_{5},$
respectively.  Let $x\neq q^{36}$ be a nontrivial
character degree of $L.$  Then the following hold.

\begin{enumerate}[$(i)$]

\item If $(\ell_1\ell_2,x)=1,$ then $x$ is the degree of  unipotent characters labeled by the symbols
$E_6[\theta^i],i=1,2,$ with degree $\frac{1}{3}q^7\Phi_1^6\Phi_2^4\Phi_4^2\Phi_5\Phi_8.$

\item We have $(x,\ell_2\ell_3)>1.$

\item If $(x,\ell_1\ell_3)=1,$ then $x$ is the degree of unipotent characters labeled by the symbols $(D_4,1)$ or
$(D_4,\epsilon)$ with degrees $\frac{1}{2}q^3\Phi_1^4\Phi_3^2\Phi_5\Phi_9~\mbox{or~}\frac{1}{2}q^{15}\Phi_1^4\Phi_3^2\Phi_5\Phi_9.$

\item We have $(x,p\ell_2\ell_4)>1,$ and  if $(x,p\ell_3\ell_4)=1$
then $x=\Phi_3^2\Phi_6\Phi_9\Phi_{12}.$

\item $H$ possesses two irreducible characters $\chi_i,i=1,2$ with
degrees
$\Phi_1^6\Phi_2^4\Phi_3^2\Phi_4^2\Phi_6^2\Phi_5\Phi_{8}\Phi_{9}$ and $\Phi_1^6\Phi_2^4\Phi_3^3\Phi_4^2\Phi_6^2\Phi_{5}\Phi_{8}\Phi_{12},$
respectively such that no proper multiples of each degree is a degree of
$H.$

\item The $p$-part of $x$ is at most
$q^{25}.$

\item The unipotent character labeled by the symbol $\phi_{6,1}$ with degree $q\Phi_8\Phi_{9}$ is
 the smallest nontrivial character degree of $H.$

\item Assume $x,y\in \cd(L)-\{1,q^{36}\}.$ If $p\neq 3,$ then
$(x,y)>1.$ If $p= 3$ and $(x,y)=1,$ then
$\{x,y\}=\{\frac{1}{3}q^7\Phi_1^6\Phi_2^4\Phi_4^2\Phi_5\Phi_8,~\Phi_3^2\Phi_6\Phi_9\Phi_{12}\}.$

\item $L$ has no consecutive degrees.

\item Assume $3\mid q-1.$ There exists $\chi\in\Irr(L)$ of degree
$\frac{1}{3}q^{9}\Phi_1^4\Phi_2^3\Phi_3\Phi_4^2\Phi_5\Phi_6\Phi_{8}\Phi_{12}$ with multiplicity
$6$ such that $\chi(1)$ divides no degree of $H.$
 Moreover ${\rm E}_6(q)_{ad}$ has an irreducible character $\psi$ of degree $q^9\Phi_2\Phi_4^2\Phi_5\Phi_6^2\Phi_8\Phi_{9}\Phi_{12}$
 with multiplicity $1$ such that $\psi(1)$ is not a degree of $H.$

\end{enumerate}
\end{lemma}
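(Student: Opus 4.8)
The plan is to follow the template of Lemma \ref{F4} for parts $(i)$--$(ix)$ and to isolate $(x)$, since $(x)$ is the one assertion with no analogue in the $\mathrm{F}_4$ case (there the centre is trivial and $L=H=H^{*}$) and is the place where the nontrivial centre forces one to bring in the adjoint group. First I would dispose of $(i)$--$(vii)$, together with the explicit ``degree and multiplicity'' assertions appearing in $(x)$, by inspecting the degree lists of $L=\mathrm{E}_6(q)_{sc}$ and of $H^{*}=\mathrm{E}_6(q)_{ad}$ recorded in \cite{Lubweb}. Each is a finite check: divisibility of degrees by the fixed primitive prime divisors $\ell_1,\dots,\ell_4$ of $\Phi_{12},\Phi_9,\Phi_8,\Phi_5$, the $p$-part bound $q^{25}$, and the identity of the smallest nontrivial degree. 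These require no argument beyond reading off the data, exactly as in the proof of Lemma \ref{F4}.

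The combinatorial core is $(viii)$. I assume $x,y\in\cd(L)-\{1,q^{36}\}$ with $(x,y)=1$ and split on whether $(\ell_1\ell_2,x)=1$. If it is, then $(i)$ forces $x=\tfrac13 q^7\Phi_1^6\Phi_2^4\Phi_4^2\Phi_5\Phi_8$, so $p\ell_3\ell_4\mid x$; coprimality then gives $(p\ell_3\ell_4,y)=1$, and $(iv)$ forces $y=\Phi_3^2\Phi_6\Phi_9\Phi_{12}$. To pin down $p=3$ I argue by contradiction: if $p\neq 3$ then $3\mid q^2-1$, so $\Phi_1^6\Phi_2^4=(q-1)^6(q+1)^4$ is divisible by $3^4$ and hence $3\mid x$, while $\Phi_3\Phi_6=q^4+q^2+1\equiv 0\pmod 3$ gives $3\mid y$, contradicting $(x,y)=1$; thus this case yields precisely the stated exceptional pair. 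Otherwise $(\ell_1\ell_2,x)>1$ and, by symmetry, $(\ell_1\ell_2,y)>1$, and coprimality forces exactly one of $\ell_1,\ell_2$ to divide each of $x,y$, so without loss $\ell_2\mid x$ and $\ell_1\mid y$. Applying $(ii)$ to $y$ (using $\ell_2\nmid y$) gives $\ell_3\mid y$, whence $(x,\ell_1\ell_3)=1$; then $(iii)$ forces $x$ to be one of the two $(D_4,\cdot)$ degrees, each divisible by $p\ell_2\ell_4$. This makes $(p\ell_2\ell_4,y)=1$, contradicting the universal statement $(y,p\ell_2\ell_4)>1$ of $(iv)$. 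So the second case cannot occur, proving $(viii)$.

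Part $(ix)$ is then a short consequence. If $x=y+1$ with $x>y>1$, I first note that neither can equal $q^{36}$, since $q^{36}\pm1$ has a primitive prime divisor (of $\Phi_{72}$, resp.\ $\Phi_{36}$) not dividing $|L|$ and so is not a degree; hence $x,y\in\cd(L)-\{1,q^{36}\}$ with $(x,y)=1$. By $(viii)$ we are in the $p=3$ case with $\{x,y\}=\{\tfrac13 q^7\Phi_1^6\Phi_2^4\Phi_4^2\Phi_5\Phi_8,\ \Phi_3^2\Phi_6\Phi_9\Phi_{12}\}$, and the magnitude comparison $\tfrac13 q^7\Phi_1^6\Phi_2^4\Phi_4^2\Phi_5\Phi_8>q^{25}>q^{17}>\Phi_3^2\Phi_6\Phi_9\Phi_{12}$ for $q\geq3$ shows the two degrees differ by far more than $1$, a contradiction.

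The genuinely new part, and the one I expect to be the main obstacle, is $(x)$, because it is the only claim that cannot be settled inside $\cd(L)$ alone. Here I would first locate in \cite{Lubweb} the character $\chi\in\Irr(L)$ of degree $\tfrac13 q^9\Phi_1^4\Phi_2^3\Phi_3\Phi_4^2\Phi_5\Phi_6\Phi_8\Phi_{12}$ with multiplicity $6$, and verify there that $\chi(1)$ divides no other degree of $L$; since $\cd(H)\subseteq\cd(L)$, this already shows $\chi(1)$ divides no degree of $H$ apart from possibly $\chi(1)$ itself. It then remains to prove $\chi(1)\notin\cd(H)$, and this is where the adjoint group enters: as $3\mid q-1$, $H$ is normal of prime index $3$ in $H^{*}=\mathrm{E}_6(q)_{ad}$, so by Clifford theory every $\varphi\in\Irr(H)$ satisfies $\varphi(1)\in\cd(H^{*})$ or $3\varphi(1)\in\cd(H^{*})$. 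Hence it suffices to check, from the degree list of $\mathrm{E}_6(q)_{ad}$ in \cite{Lubweb}, that neither $\chi(1)$ nor $3\chi(1)$ is a degree of $\mathrm{E}_6(q)_{ad}$. For the second assertion I would exhibit $\psi\in\Irr(\mathrm{E}_6(q)_{ad})$ of degree $q^9\Phi_2\Phi_4^2\Phi_5\Phi_6^2\Phi_8\Phi_9\Phi_{12}$ with multiplicity $1$, and conclude $\psi(1)\notin\cd(H)$ by checking $\psi(1)\notin\cd(L)$, again via $\cd(H)\subseteq\cd(L)$. The delicate work in $(x)$ is pure bookkeeping against the full degree lists of both the simply connected and the adjoint groups; the only conceptual inputs are the index-$3$ extendibility dichotomy and the containment $\cd(H)\subseteq\cd(L)$.
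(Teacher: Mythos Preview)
Your proposal is correct and follows essentially the same approach as the paper. The paper's published proof simply states that the argument is ``basically identical to that of Lemma~\ref{F4}'' and omits details; your write-up of $(viii)$, $(ix)$, and especially $(x)$ matches the template used for $\mathrm{F}_4(q)$ and the (commented-out) detailed proof for ${}^2\mathrm{E}_6(q)$, including the index-$3$ Clifford-theoretic reduction to the adjoint degree list in part $(x)$.
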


\begin{lemma}\label{E7} Let $H$ be the simple exceptional group ${\rm E}_7(q)$ and let $\ell_i,i=1,2,\cdots,5$
be primitive prime divisors of $\Phi_{18},\Phi_{14},$ $\Phi_{12},\Phi_9,$ and $\Phi_7,$
respectively. Let $x$ be a nontrivial
character degree of $L$ with $x\neq q^{63}.$ Then the
following hold.

\begin{enumerate}[$(i)$]

\item If $(\ell_1\ell_2,x)=1,$ then $x$ is the degree of unipotent
characters labeled by the symbol $E_7[\xi]$ or $E_7[-\xi]$ with
degree
$\frac{1}{2}q^{11}\Phi_1^7\Phi_3^3\Phi_4^2\Phi_5\Phi_7\Phi_8\Phi_9\Phi_{12}.$

\item If $(\ell_2\ell_3,x)=1,$ then $x$ is the  degree of 
unipotent characters labeled by the symbols $(D_4,\epsilon_1)$ or
$(D_4,\epsilon_2)$ with degree
$$\frac{1}{2}q^{4}\Phi_1^4\Phi_3^2\Phi_5\Phi_7\Phi_9\Phi_{10}\Phi_{18}
\text{ or }\frac{1}{2}q^{25}\Phi_1^4\Phi_3^2\Phi_5\Phi_7\Phi_9\Phi_{10}\Phi_{18}.$$

\item If $(\ell_1\ell_3,x)=1,$ then $\ell_4\ell_5\mid x$ or $x$ is
the degree of unipotent characters labeled by 
$(E_6[\theta^i],1)$ or $(E_6[\theta^i],\epsilon),$ where $i=1,2,$
with degrees
$$\frac{1}{3}q^7\Phi_1^6\Phi_2^6\Phi_4^2\Phi_5\Phi_7\Phi_8\Phi_{10}\Phi_{14}\text{ or
}\frac{1}{3}q^{16}\Phi_1^6\Phi_2^6\Phi_4^2\Phi_5\Phi_7\Phi_8\Phi_{10}\Phi_{14}.$$

\item We have $(p\ell_2\ell_5,x)>1,$ and if
$(\ell_4\ell_5,x)=1,$ then $x$ is the degree of  the
unipotent characters labeled by $\phi_{512,11}$ or
$\phi_{512,12}$ with degree
$\frac{1}{2}q^{11}\Phi_2^7\Phi_4^2\Phi_6^3\Phi_8\Phi_{10}\Phi_{12}\Phi_{14}\Phi_{18}.$

\item $H$ possesses two semisimple irreducible characters
$\chi_i,i=1,2$ corresponding to the maximal tori with orders
$\Phi_{18}\Phi_2$ and $\Phi_{14}\Phi_2,$ respectively, and having degrees
$$\Phi_1^7\Phi_2^6\Phi_3^3\Phi_4^2\Phi_5\Phi_6^3\Phi_7\Phi_8\Phi_9\Phi_{10}\Phi_{12}\Phi_{14} \text{ and } 
\Phi_1^7\Phi_2^6\Phi_3^3\Phi_4^2\Phi_5\Phi_6^3\Phi_7\Phi_8\Phi_9\Phi_{10}\Phi_{12}\Phi_{18},$$
respectively such that no proper multiples of these degrees is in
$\cd(L).$

\item The $p$-part of $x$ is at most $q^{46}.$

\item The unipotent character  labeled by the symbol $\phi_{7,1}$
with degree $q\Phi_{7}\Phi_{12}\Phi_{14}$
 is the
smallest nontrivial character degree of $H.$

\item If $x,y\in \cd(L)-\{1,q^{63}\},$ then $(x,y)>1.$

\item $L$ has no consecutive degrees.

\item If $q$ is odd and $q\equiv \epsilon$ \emph{(mod $4$)}, where $q\in\{\pm 1\},$ then
$L$ has an irreducible character $\chi$ of degree
$q^{14}\Phi_1^3\Phi_2^2\Phi_3^2\Phi_5\Phi_6^2\Phi_7\Phi_9\Phi_{10}\Phi_{12}\Phi_{14}\Phi_{18}$
with multiplicity ${(q-\epsilon)}/{4}$ such that $\chi(1)$ divides
no degree of $H.$
Moreover $E_7(q)_{ad}$ has a nontrivial degree $\psi(1)$ which is
not a degree of $H.$ In particular, if $q\equiv 1$ \emph{(mod 4)},
then we can choose
$$\psi(1)=q^{28}\Phi_2^3\Phi_3\Phi_6^2\Phi_9\Phi_{10}\Phi_{12}\Phi_{14}\Phi_{18}$$
and if $q\equiv -1$ \emph{(mod 4)}, then
$\psi(1)=q^{28}\Phi_1^3\Phi_3^2\Phi_{5}\Phi_6\Phi_{7}\Phi_9\Phi_{12}\Phi_{18}.$

\end{enumerate}
\end{lemma}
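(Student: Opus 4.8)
The plan is to follow the template of the proof of Lemma \ref{F4}. Statements $(i)$--$(vii)$ are purely computational: given the explicit degree lists of $L\cong\mathrm{E}_7(q)_{sc}$ and of $\mathrm{E}_7(q)_{ad}$ recorded in \cite{Lubweb}, one reads off, for each prescribed set of primitive prime divisors among $\ell_1,\dots,\ell_5$, precisely which degrees avoid them; the $p$-part bound in $(vi)$ and the minimal degree in $(vii)$ are likewise direct inspections. I would organise $(i)$--$(iv)$ so that they isolate, for each relevant pair $\ell_i\ell_j$, the very short list of degrees coprime to it, using Lemma \ref{Zsigmondy} to guarantee that the primitive primes exist and to identify which cyclotomic factor $\Phi_k$ carries each $\ell_i$. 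These four divisibility dichotomies are the engine driving $(viii)$ and $(ix)$.

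For $(viii)$ I would argue exactly as in Lemma \ref{F4}$(viii)$. Assume $x,y\in\cd(L)-\{1,q^{63}\}$ with $(x,y)=1$. Since $(x,y)=1$, no prime $\ell_i$ divides both, so for each $i$ at least one of $x,y$ is coprime to $\ell_i$. By $(i)$ there is a unique nontrivial degree coprime to $\ell_1\ell_2$, and two coprime nontrivial integers cannot coincide; hence after interchanging $x$ and $y$ I may assume $\ell_1\mid x$ or $\ell_2\mid x$. I then split according to which of $\ell_1,\ell_2$ divides $x$, and in each branch push the coprimality constraint through $(ii)$, $(iii)$ and $(iv)$ to pin $x$ and $y$ down to the handful of candidate degrees listed there; every surviving combination is then seen to share a common $\Phi_k$-factor, hence a common $\ell_i$, contradicting $(x,y)=1$. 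The essential difference from the $\mathrm{F}_4$ case is that here every branch closes outright: there is no residual coprime pair, so $(viii)$ carries no $p=3$ exception. Consequently $(ix)$ is immediate: if $x=y+1$ were a pair of consecutive degrees, then after discarding $1$ and $q^{63}$ (the values $q^{63}\pm1$ do not divide $|H|$) using $(vii)$, we would obtain $x,y\in\cd(L)-\{1,q^{63}\}$ with $(x,y)=1$, directly contradicting $(viii)$.

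Statement $(x)$ is the part feeding Step~3 in the quasisimple case $\Centralizer_L(M)=L$, where $M=Z(L)$ has order $\gcd(2,q-1)=2$ for odd $q$. Here I would exhibit, directly from \cite{Lubweb}, the irreducible character $\chi$ of $L$ of the stated degree and multiplicity $(q-\epsilon)/4$. The first task is to verify that $\chi(1)$ divides no degree of $L$ other than itself; since $\cd(H)\subseteq\cd(L)$, this already shows $\chi(1)$ divides no degree of $H-\{\chi(1)\}$. It then remains to show $\chi(1)\notin\cd(H)$: using that $H\unlhd\mathrm{E}_7(q)_{ad}$ with index $d=2$, every $\varphi\in\Irr(H)$ satisfies $\varphi(1)\in\cd(\mathrm{E}_7(q)_{ad})$ or $2\varphi(1)\in\cd(\mathrm{E}_7(q)_{ad})$, so it suffices to check that neither $\chi(1)$ nor $2\chi(1)$ occurs in the adjoint degree list. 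The explicit adjoint degrees $\psi(1)$, with the two formulas according to whether $q\equiv 1$ or $q\equiv -1\pmod 4$, are produced the same way. The main obstacle throughout is bookkeeping: one must confront the full degree lists to confirm both the exhaustiveness of the dichotomies $(i)$--$(iv)$ and the non-divisibility assertions in $(x)$, a verification that is routine in principle but lengthy, which is precisely why I would defer the explicit computation to \cite{Lubweb} and to the model supplied by Lemma \ref{F4}.
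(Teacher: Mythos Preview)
Your proposal is correct and follows essentially the same route as the paper: parts $(i)$--$(vii)$ and $(x)$ are verified directly from the degree data in \cite{Lubweb} (with $(v)$ relying on \cite[Lemma~2.3]{MT} and \cite[Lemma~2.5(i)]{Dolfi} for the existence of the semisimple characters), and $(viii)$ is obtained by the same primitive-prime-divisor case analysis you outline, feeding $(i)$--$(iv)$ against each other until every branch closes with no residual coprime pair; $(ix)$ then follows immediately from $(viii)$ after excluding $1$ and $q^{63}$ exactly as you describe. Your handling of $(x)$ via the index-$2$ embedding $H\unlhd \mathrm{E}_7(q)_{ad}$ is also the argument the paper uses (modelled on the ${}^2\mathrm{E}_6$ case).
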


\begin{lemma}\label{E8} Let $H$ be the simple exceptional group ${\rm E}_8(q)$ and let $\ell_i,i=1,2,\cdots,5,$
be primitive prime divisors of $\Phi_{30},\Phi_{24},\Phi_{20},\Phi_{15}$ and $\Phi_{14},$
respectively. Let $x$ be a
nontrivial character degree of $H$ with $x\neq q^{120}.$
Then the following hold.

\begin{enumerate}[$(i)$]

\item If $(\ell_1\ell_2,x)=1,$ then $x$ is the degree of unipotent
characters labeled by the symbol $E_8[-\theta]$ or $E_8[-\theta^2]$
with degree
$\frac{1}{6}q^{16}\Phi_1^8\Phi_2^6\Phi_3^2\Phi_4^4\Phi_5^2\Phi_7\Phi_8^2\Phi_9\Phi_{10}^2\Phi_{12}\Phi_{14}\Phi_{15}\Phi_{20}.$

\item If $(\ell_2\ell_3,x)=1,$ then $x$ is the degree of  unipotent
characters labeled by the symbol $E_8[i]$ or $E_8[-i]$ with degree
$\frac{1}{4}q^{16}\Phi_1^8\Phi_2^8\Phi_3^4\Phi_5^2\Phi_6^4\Phi_7\Phi_9\Phi_{10}^2\Phi_{14}\Phi_{15}\Phi_{18}\Phi_{30}.$

\item If $(\ell_1\ell_3,x)=1,$ then $x$ is the degree of unipotent
characters labeled by the symbol $E_8[\zeta^k],k=1,2,\cdots,4$ with
degree
$\frac{1}{5}q^{16}\Phi_1^8\Phi_2^8\Phi_3^4\Phi_4^4\Phi_6^4\Phi_7\Phi_8^2\Phi_9\Phi_{12}^2\Phi_{14}\Phi_{18}\Phi_{24}$
or $x$ is the degree of unipotent characters labeled by the symbol
$(D_4,\phi_{1,0})$ or $(D_4,\phi_{1,24})$ with degrees
$\frac{1}{2}q^{3}\Phi_1^4\Phi_3^2\Phi_5^2\Phi_7\Phi_8\Phi_9\Phi_{14}\Phi_{15}\Phi_{24}
,~\frac{1}{2}q^{63}\Phi_1^4\Phi_3^2\Phi_5^2\Phi_7\Phi_8\Phi_9\Phi_{14}\Phi_{15}\Phi_{24},$
respectively.

\item We have $(p\ell_4\ell_5,x)>1,$ and $(p\ell_2\ell_5,x)>1.$

\item $H$ has irreducible characters $\chi_i,i=1,2,3$ with degrees
$|H|/\Phi_{m_i},$ where $m_i$ is $30,24$ and $20,$ respectively.
Moreover no proper multiples of $\chi_i(1),i=1,2,3$  is a degree of
$H.$

\item The $p$-part of $x$ is at most $q^{91}.$

\item The unipotent character with degree
$q\Phi_4^2\Phi_8\Phi_{12}\Phi_{20}\Phi_{24}$ labeled by the symbol
$\phi_{8,1}$
 is the
smallest nontrivial character degree of $H.$

\item If $x,y\in \cd(H)-\{1,q^{120}\},$ then $(x,y)>1.$

\item $H$ has no consecutive degrees.

\end{enumerate}
\end{lemma}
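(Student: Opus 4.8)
The structure of this lemma exactly parallels that of Lemma~\ref{F4} and Lemma~\ref{2E6}, and the author has already announced that for all lemmas after \ref{F4} ``the proofs are basically identical.'' So the plan is to follow the template established in the proof of Lemma~\ref{F4}. Statements $(i)$--$(vii)$ are direct combinatorial assertions about the explicit character degree list of $\mathrm{E}_8(q)$, which is available in \cite{Lubweb} together with the unipotent degrees in \cite[13.9]{Car85}. Each of these is verified by inspection: for $(i),(ii),(iii)$ one lists all degrees of $L$ coprime to the indicated product of primitive prime divisors and reads off the survivors; for $(iv)$ one checks that no degree is simultaneously coprime to $p\ell_4\ell_5$ (resp.\ $p\ell_2\ell_5$); for $(v)$ one exhibits the three semisimple characters $\chi_i(1)=|H|/\Phi_{m_i}$ and checks maximality of each among divisibility chains in $\cd(H)$; for $(vi)$ one notes that $q^{91}$ is the largest $p$-power dividing any non-Steinberg degree, using Table~\ref{Tab1}; and for $(vii)$ one identifies $q\Phi_4^2\Phi_8\Phi_{12}\Phi_{20}\Phi_{24}$ as the minimal nontrivial degree. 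These I would dispatch with the single sentence the author uses: they can be checked directly from the degree data in \cite{Lubweb}. The only statements requiring genuine argument are $(viii)$ and $(ix)$.

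For $(viii)$, the claim is \emph{stronger} than its $\mathrm{F}_4$ and ${}^2\mathrm{E}_6$ analogues: here \emph{any} two nontrivial degrees $x,y\in\cd(H)-\{1,q^{120}\}$ satisfy $(x,y)>1$, with no coprime exception surviving in characteristic $3$ (or any $p$). The plan is a case analysis driven by the primitive prime divisors $\ell_1,\dots,\ell_5$ of $\Phi_{30},\Phi_{24},\Phi_{20},\Phi_{15},\Phi_{14}$. Suppose $(x,y)=1$. By $(i)$, not both of $x,y$ can be coprime to $\ell_1\ell_2$, so after relabelling $(\ell_1\ell_2,x)>1$; by symmetry $(\ell_1\ell_2,y)>1$ as well, since if $y$ were coprime to $\ell_1\ell_2$ then $(i)$ would force $y$ to be a specific degree divisible by $p\ell_4\ell_5$, and coprimality $(x,y)=1$ would then give $(p\ell_4\ell_5,x)=1$, contradicting $(iv)$. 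Thus each of $x,y$ is divisible by at least one of $\ell_1,\ell_2$; by coprimality they cannot share one, so one of them carries $\ell_1$ and the other carries $\ell_2$ — say $\ell_1\mid x$, $\ell_2\mid y$, whence $(\ell_2,x)=(\ell_1,y)=1$. Now I would push the same primitive-prime-divisor bookkeeping through $(ii)$ and $(iii)$: examining whether $\ell_3$ divides $x$ or $y$, each branch forces one of the two degrees to be coprime to a forbidden product ($p\ell_4\ell_5$, $p\ell_2\ell_5$, or $\ell_2\ell_3$) and lands on a contradiction with $(ii)$, $(iii)$, or $(iv)$. Because $\mathrm{E}_8$ is simply connected with trivial centre, there is no $\frac1p$-scaled ``$p=3$'' exceptional pair to salvage as there was for $\mathrm{F}_4$ and ${}^2\mathrm{E}_6$, so every branch should close cleanly; this absence of a surviving coprime pair is what makes $(viii)$ come out in the strong, exception-free form.

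For $(ix)$, the argument is immediate from $(vii)$ and $(viii)$, exactly as in Lemma~\ref{F4}: if $x=y+1$ then $(x,y)=1$ and $x>y>1$; since $q^{120}\pm 1$ do not divide $|H|$, both $x,y$ lie in $\cd(H)-\{1,q^{120}\}$, and then $(viii)$ gives $(x,y)>1$, a contradiction. The main obstacle is entirely in $(viii)$: making sure the five-way divisibility case analysis is genuinely exhaustive — in particular checking that in the branch where $\ell_1\mid x$ and $\ell_3\mid x$ (so $\ell_1\ell_3\mid x$ and $(\ell_1\ell_3,y)=1$), statement $(iii)$ offers three possible shapes for $y$, and each must be separately eliminated via $(iv)$ by locating a common primitive prime divisor forced into both $x$ and $y$. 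This is the bookkeeping-heavy step, but it is mechanical once the degree list from \cite{Lubweb} is in hand, and it is precisely the computation the author suppresses with ``we will skip all the detail.''
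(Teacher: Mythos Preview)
Your plan is correct and matches the paper's approach exactly: parts $(i)$--$(vii)$ are dispatched by direct inspection of the degree data in \cite{Lubweb}, part $(viii)$ is the same primitive-prime-divisor case analysis (first forcing $(\ell_1\ell_2,x)>1$ and $(\ell_1\ell_2,y)>1$ via $(i)$ and $(iv)$, then splitting on $\ell_3$ and closing each branch with $(ii)$, $(iii)$, $(iv)$), and part $(ix)$ follows from $(vii)$, $(viii)$ and the observation that $q^{120}\pm 1\nmid |H|$. One small wording slip: in the $\ell_1\ell_3\mid x$ branch you write that the three shapes for $y$ from $(iii)$ are eliminated ``by locating a common primitive prime divisor forced into both $x$ and $y$,'' but the actual mechanism (which you state correctly earlier) is the opposite---each such $y$ is divisible by $p\ell_2\ell_5$ or $p\ell_4\ell_5$, forcing $x$ to be coprime to one of these products and contradicting $(iv)$.
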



\section{Verifying Step 1: $G'=G''$}\label{sec4}

We verify Step $1$ in this section.   Let $\chi$ be an irreducible character of a finite group $G$. Following \cite{Tong},   $\chi$ is \emph{isolated}
in $G$ if $\chi(1)$ is divisible by no proper nontrivial character
degree of $G,$ and no proper multiple of $\chi(1)$ is a character
degree of $G.$ In this situation, $\chi(1)$ is an
\emph{isolated degree} of $G.$ 

\begin{prop}\label{prop:step1}
Let $G$ be a finite group and let $H$ be one of the finite simple groups in \eqref{eqn1}. Assume that $\cd(G)=\cd(H)$. Then $G'=G''.$
\end{prop}

\begin{proof}
By way of contradiction, suppose
that $G'\neq G''.$ Then there exists a normal subgroup $N$
of $G$ such that $G/N$ is solvable  and minimal with respect to being
nonabelian.  By \cite[Lemma~2.3]{Tong}, $G/N$ is an $r$-group for some
prime $r$ or $G/N$ is a Frobenius group.

\smallskip
{\bf Case $1.$} $G/N$ is an $r$-group. Since
$G/N$ is a nonabelian $r$-group, it has an irreducible
character $\tau$ of degree $r^b$ for some integer $b\ge 1$. By Lemma \ref{lem:pp}, the only nontrivial prime power degree of $H$ is the
degree of the Steinberg character of $H,$ which is $|H|_p.$ It follows that $\tau(1)=r^b=|H|_p$ and hence
$r=p.$ By Thompson's Theorem (\cite[Corollary~12.2]{Isaacs}), $H$
has a nontrivial irreducible character $\psi$ with $p\nmid \psi(1)$.
By \cite[Lemma $2$]{Hupp}, $\psi_N\in\Irr(N)$ and
hence by Gallagher's Theorem (\cite[Corollary~6.17]{Isaacs}), $\tau\psi$ is an irreducible character
of $G$ of degree $\tau(1)\psi(1)=|H|_p\psi(1).$ But this
contradicts \cite[Lemma~2.4]{Tong}.

\smallskip
{\bf Case $2.$} $G/N$ is a Frobenius group with kernel
$F/N,$ $|F/N|=r^a,$ $1<m=|G:F|\in \cd(G)$ and $r^a\equiv 1
(\mbox{mod $m$}).$ By \cite[Lemma~2.4]{Tong},  $\St_H(1)=|H|_p$ is an
isolated degree of $H$ so by \cite[Lemma~2.3(b)(2)]{Tong}, either
$m=|H|_p$ or $r=p.$ We consider each family of simple groups separately.

\smallskip
{\bf Subcase $H\cong {\rm F}_4(q).$} Recall that $q>2$ in this case.
Assume first that $m=|H|_p.$ As $r\nmid m,$  $r\neq p.$
By \cite[$13.9$]{Car85}, $H$ possesses a unipotent character
$\varphi$ labeled by the symbol $\phi_{2,4}'$ with degree
$\varphi(1)=\frac{1}{2}q\Phi_4\Phi_8\Phi_{12}.$ As no proper
multiple of $m$ is a character of $G$ and $\varphi(1)\nmid m,$ we
deduce from \cite[Lemma~2.3(b)(1)]{Tong}  that $r^a\mid \varphi(1)^2.$ As
$r\neq p,$ $r^a\mid \varphi(1)_{p'}^2=
\frac{1}{4}\Phi_4^2\Phi_8^2\Phi_{12}^2.$ Now
$$\frac{1}{2}\Phi_4\Phi_8\Phi_{12}=\frac{1}{2}(q^4+1)(q^6+1)<q^{10}.$$
Hence $r^a\leq \varphi(1)_{p'}^2<q^{20}.$ Since $m\mid r^a-1,$
  $m=q^{24}\leq r^a-1<q^{20}-1<q^{20},$ a contradiction.
  
Thus $r=p.$ Hence $m\in \cd(G)$ is coprime to $p$ and so
$m\neq |H|_p.$ Let $\chi_i\in\Irr(H),i=1,2,$ be irreducible
characters of $H$ obtained from Lemma \ref{F4}$(v)$. For each $i,$ as
no proper multiple of $\chi_i(1)$ is a degree of $H,$ \cite[Lemma~2.3(b)(2)]{Tong} implies that $m$ divides $\chi_i(1)$ and hence
$m$  divides  $\gcd(\chi_1(1),\chi_2(1)).$ Thus $m$ is coprime to $\ell_1\ell_2.$ By Lemma
\ref{F4}$(i)$,
$m=\frac{1}{4}q^4\Phi_1^4\Phi_2^4\Phi_3^2\Phi_6^2,$ which is
impossible as $m$ is coprime to $p.$ 

\smallskip
{\bf Subcase $H\cong {}^2{\rm E}_6(q).$}
Assume first that $m=|H|_p.$ As $r\nmid m,$ $r\neq p.$ By Lemma
\ref{2E6}$(vii)$, $H$ possesses a unipotent character $\varphi$
labeled by the symbol $\phi_{2,4}'$ with degree
$\varphi(1)=q\Phi_8\Phi_{18}.$ As no proper multiple of $m$ is a
character of $G$ and $\varphi(1)\nmid m,$ by \cite[Lemma~2.3(b)(1)]{Tong}, $r^a\mid \varphi(1)^2.$ As $r\neq p,$ $r^a\mid \varphi(1)_{p'}^2= \Phi_8^2\Phi_{18}^2.$ Now $$\Phi_8\Phi_{18}=q^{10}-q^{7}+q^{6}+q^4-q^3+1<q^{15}.$$ As
$r^a\mid \varphi(1)_{p'}^2,$ $r^a\leq
\varphi(1)_{p'}^2<q^{30}.$ Since $m\mid r^a-1,$ 
$$m=q^{36}\leq r^a-1<q^{30}-1<q^{30},$$ which is impossible.

Thus $r=p.$ Then $m\in \cd(G)-\{1,|H|_p\}$ is coprime to
$p.$ By \cite[Lemma~2.3(b)(2)]{Tong}  and Lemma \ref{2E6}$(v),$ $m$
divides both $\chi_i(1),i=1,2.$ As $\chi_i(1)$ is coprime to $\ell_i$
for $i=1,2,$   $m$ is coprime to $\ell_1\ell_2$. By Lemma \ref{2E6}$(i)$, $m=\frac{1}{3}q^{7}\Phi_1^4\Phi_2^6\Phi_4^2\Phi_8\Phi_{10}$, which is impossible
as $p\nmid m.$ 

The arguments for the remaining cases are similar.
\end{proof}

\section{Verifying Step 2: $G'/M$ is isomorphic to $H$}\label{sec5}

Let $H$ be one of the simple groups listed in \eqref{eqn1} and let $G$ be a finite group such that $\cd(G)=\cd(H)$. By Proposition \ref{prop:step1}, $G'=G''$. Let $M\leq G'$ be a normal subgroup of $G$ such that $G'/M$ is a chief factor of $G$. The main purpose of this section is to show that $G'/M\cong H.$  Recall that $H$ is a finite simple exceptional group of Lie type defined over a field of size $q=p^f$, where $p$ is a prime.

\begin{prop}\label{prop:step2}
Let $G$ be a finite group and let $H$ be one of the finite simple groups in \eqref{eqn1}.  Let $M\leq G'$ be a normal subgroup of $G$ such that $G'/M$ is a chief factor of $G$. Then $G'/M\cong H.$ 
\end{prop}

\begin{proof} By Proposition \ref{prop:step1}, $G'$ is perfect. Hence $G'/M\cong S^k$, where $S$ is a nonabelian simple group and $k\ge 1$ is an integer. Using the classification of finite simple groups, we first show that $S$ must be a finite simple groups of Lie type in characteristic $r$ for some prime $r$ and then show that $r=p, k=1$ and $S\cong H$.

\smallskip

(1) Assume  $S=\Alt_n,n\geq 7.$ Let $\theta_i,i=1,2,$ be irreducible
characters of $S$ obtained from Lemma \ref{lem5}$(i)$ with
$\theta_1(1)=n(n-3)/2$ and $\theta_2(1)=(n-1)(n-2)/2.$ Now both $\theta_i$ extend to ${\Aut}(\Alt_n)\cong \Sym_n$ and
$\theta_2(1)=\theta_1(1)+1.$ By Lemma \ref{lem:ext}, 
$\theta_i^k\in \Irr(G'/M)$ extends to $G/M,$ for $i=1,2.$
Hence $\theta_i(1)^k\in \cd(G)$ and $\theta_i(1)^k$ are
coprime for $i=1,2.$ Note that $\Sym_n$ also has an irreducible character labelled by the partition $(n-1,1)$ which is irreducible upon restriction to $\Alt_n$. Thus $\Alt_n$ has an irreducible character $\theta$ of degree $n-1$ which is extendible to $\Aut(\Alt_n)\cong \Sym_n$. Hence $\theta(1)^k=(n-1)^k\in \cd(G)$.

We need the following observation:  $$\Phi_3\Phi_6\Phi_{12}=q^8+q^4+1<
\frac{1}{3}q^4\Phi_1^4\Phi_2^4\Phi_4^2\Phi_{8}=\frac{1}{3}q^4(q^2-1)^2(q^4-1)(q^8-1).$$
If $q=3^f,$ where $f\geq 1,$ then
$\frac{1}{3}q^4\Phi_1^4\Phi_2^4\Phi_4^2\Phi_{8}$ is divisible by
$q^8-1=3^{8f}-1,$ which is  in turn divisible by $3^8-1=2^5\cdot
5\cdot 41,$ and so $\frac{1}{3}q^4\Phi_1^4\Phi_2^4\Phi_4^2\Phi_{8}$
is always divisible by $2^5\cdot 3\cdot 5\cdot 41.$

\smallskip

{\bf Subcase}  $H\cong \textrm{F}_4(q),q>2.$
Assume first that $p\neq 3.$ By Lemma \ref{F4}$(viii),$ one of the
degrees $\theta_i(1)^k,i=1,2,$ must be $q^{24}.$ However, 
$(n-1,n-2)=1$ and $(n,n-3)=(n,3)$ so $\theta_i(1)^k$ can
never be a power of $p\neq 3.$ Thus $p=3.$ If $\theta_j(1)^k=q^{24}$
for some $j,$ then $j=1$ and hence
$\theta_1(1)^k=(n(n-3)/2)^k=3^{24f},$ where $q=3^f.$ As $n\geq 7,$
$n(n-3)/2$ is a prime power if and only if $n=9,$ so
that  $\theta_1(1)^k=3^{3k}=3^{24f}.$ Now $G$ has a degree $(n-1)^k=8^k$. By
Lemma \ref{lem:pp}, the only nontrivial prime power degree
of $G$ is $|H|_p=q^{24}=3^{24f},$ whence $3^{24f}=8^k$, a contradiction. Lemma
\ref{F4}$(viii)$ yields
$$\{\theta_1^k(1),\theta_2^k(1)\}=\{\Phi_3\Phi_6\Phi_{12},\frac{1}{3}q^4\Phi_1^4\Phi_2^4\Phi_4^2\Phi_{8}\}.$$
By Lemma \ref{F4}$(ix)$, $G$ has no consecutive degrees, so $k\geq
2.$ But then $\Phi_3\Phi_6\Phi_{12}$ is a proper nontrivial power,
which is impossible by Lemma \ref{lem7}.

\smallskip
{\bf Subcase} $H\cong {}^2\textrm{E}_6(q).$
Assume first that $p\neq 3.$ By Lemma \ref{2E6}$(viii),$ one of the
degrees $\theta_i(1)^k,i=1,2,$ must be $q^{36}.$ However $(n-1,n-2)=1$ and $(n,n-3)=(n,3)$ so $\theta_i(1)^k$ can
never be a power of $p\neq 3.$ Thus $p=3.$ If $\theta_j(1)^k=q^{36}$
for some $j,$ then $j=1$ and hence
$\theta_1(1)^k=(n(n-3)/2)^k=3^{36f},$ where $q=3^f.$ As $n\geq 7,$
$n(n-3)/2$ is a prime power if and only if $n=9,$ so
that  $\theta_1(1)^k=3^{3k}=3^{36f}.$ Again, $G$ has a degree $(n-1)^k=8^k$. However, by
Lemma \ref{lem:pp}, the only nontrivial prime power degree
of $G$ is $q^{36}=3^{36f},$ so we obtain a contradiction. Thus by Lemma
\ref{2E6}$(viii)$,
$$\{\theta_1(1)^k,\theta_2(1)^k\}=\{\Phi_3\Phi_6^2\Phi_{12}\Phi_{18},
\frac{1}{3}q^7\Phi_1^4\Phi_2^6\Phi_4^2\Phi_{8}\Phi_{10}\}.$$
By Lemma \ref{2E6}$(ix)$, $G$ has no consecutive degrees so
$k\geq 2.$ But then $\Phi_3\Phi_6^2\Phi_{12}\Phi_{18}$ is a proper
nontrivial power. As $q=3^f,$  $\Phi_3$ is coprime to
$\Phi_6^2\Phi_{12}\Phi_{18}$ and so $\Phi_3$ is a nontrivial
proper power, which is impossible by Lemma \ref{lem7}.

The arguments for the remaining groups $ \textrm{E}_6(q),  \textrm{E}_7(q)$ and $ \textrm{E}_8(q)$ are similar. 

\smallskip
(2) Suppose that $S$ is a sporadic simple group or the Tits group. By
Table \ref{Tab2}, $S$ possesses two nontrivial irreducible
characters $\theta_i,i=1,2,$ such that $\theta_1(1)$ and $\theta_2(1)$ are coprime and
each $\theta_i$ extends to $\Aut(S).$ By Lemma \ref{lem:ext}, for each
$i,$ $\theta_i^k$ extends to $G,$ so $\theta_i(1)^k\in\cd(G).$
Observe that for each $i,$ $\theta_i(1)$ is not a prime power so
$\theta_i(1)^k\neq |H|_p.$

Assume $H\cong {\rm F}_4(q)$. As $(\theta_1(1)^k,\theta_2(1)^k)=1,$
by Lemma \ref{F4}$(viii)$, $q=3^f$ and
$$\{\theta_1(1)^k,\theta_2(1)^k\}=\{\Phi_3\Phi_6\Phi_{12},\frac{1}{3}q^4\Phi_1^4\Phi_2^4\Phi_4^2\Phi_{8}\}.$$
In this situation, the larger of $\theta_i(1)$ and $\theta_2(1)$ must
be divisible by $2^5\cdot 3\cdot 5\cdot 41.$ However this is not the
case by inspecting the degrees in Table \ref{Tab2}.

Assume $H\cong {}^2{\rm E}_6(q)$. As $(\theta_1(1)^k,\theta_2(1)^k)=1,$
Lemma \ref{2E6}$(viii)$ yields that $q=3^f$ and
$$\{\theta_1(1)^k,\theta_2(1)^k\}=\{\Phi_3\Phi_6^2\Phi_{12}\Phi_{18},
\frac{1}{3}q^7\Phi_1^4\Phi_2^6\Phi_4^2\Phi_{8}\Phi_{10}\}.$$ In this
situation, one of the degrees $\theta_i(1),i=1,2,$ must be divisible
by $2^5\cdot 3\cdot 5\cdot 41.$ However this is not the case by
inspecting the degrees in Table \ref{Tab2}.

Assume $H\cong {\rm E}_6(q)$. As $(\theta_1(1)^k,\theta_2(1)^k)=1,$
 Lemma \ref{E6}$(viii)$ implies that $q=3^f$ and
$$\{\theta_1(1)^k,\theta_2(1)^k\}=\{\Phi_3^2\Phi_6\Phi_{9}\Phi_{12},
\frac{1}{3}q^7\Phi_1^6\Phi_2^4\Phi_4^2\Phi_{5}\Phi_{8}\}.$$ In this
situation, one of the degrees $\theta_i(1),i=1,2,$ must be divisible
by $2^5\cdot 3\cdot 5\cdot 41.$ However this is not the case by
inspecting the degrees in Table \ref{Tab2}.

Assume $H\cong {\rm E}_7(q)$ or ${\rm E}_8(q)$. Observe that both
$\theta_i(1)$ are not prime powers so both $\theta_1(1)^k$ and $\theta_2(1)^k$ are
pairwise  coprime and different from  $|H|_p,$ which contradicts Lemma \ref{E7}$(viii)$ and Lemma \ref{E8}$(viii)$, respectively.

\smallskip

(3) Assume that $S$ is a finite simple group of Lie type in characteristic $r,$
with $S\neq {}^2{\rm F}_4(2)'.$  Let $\theta$ be the Steinberg
character of $S.$ Then $\theta(1)=|S|_r$ and $\theta$ extends to
${\Aut}(S).$ By Lemma \ref{lem:ext}, $\theta^k\in
\Irr(G'/M)$ extends to $G/M,$ hence $\theta(1)^k=|S|_r^k\in
\cd(G).$ Since $|H|_p$ is the unique nontrivial prime power
character degree of $G$ by Lemma \ref{lem:pp},  $\theta(1)^k=|H|_p.$ In particular,  $r=p.$ 

We next claim that $k=1$. By way of
contradiction, assume $k\geq 2.$ Write
$\theta(1)=q_1^s$ and $|H|_p=q^{a(H)}$, so $a(H)=24,36,36,63,120$, respectively (see \cite[Table 5.1.B]{KL}). Then $q_1^{sk}=q^{a(H)}.$ Let $\psi=\tau\times
\theta\times\cdots\times \theta\in \Irr(S^k),$ where
$\tau\in \Irr(S)$ with $1<\tau(1)\neq \theta(1).$ Then
$\psi(1)=\tau(1)q_1^{s(k-1)}\nmid q^{a(H)}$ and is nontrivial, so
it divides some character degree of $G,$ which is different from
$q^{a(H)}.$ By part $(vi)$ of Lemmas \ref{2E6}--\ref{E8}, 
$q_1^{s(k-1)}=q^{a(H)(k-1)/k}\leq q^{b(H)}$, where $b(H)=25,16,25,46,91$, respectively, and hence $a(H)(k-1)\leq kb(H),$
which implies that $k\leq k_0$ for some positive integer $k_0$. We  check that  $k\leq k_0$, where $k_0= 3$ unless $H={\rm E}_8(q)$ where $k_0=4$.

Let $C$ be a normal subgroup of $G$ such that $C/M=C_{G/M}(G'/M).$
Then $G'C/C\cong S^k$ is a unique minimal normal subgroup of $G/C$,
so $G/C$ embeds into ${\Aut}(S)\wr S_k,$ where $S_k$ is the
symmetric group of degree $k.$ Let $B={\Aut}(S)^k\cap G/C.$ Then
$|G/C:B|\mid k!.$ Let $\psi=1\times \theta\times\cdots\times
\theta\in \Irr(G'C/C).$ Then $\psi$ extends to
$\psi_0\in\Irr(B)$. Let $J=I_G(\psi_0)$ and let $\mu\in\Irr(J|\psi_0).$ By
\cite[Theorem~6.11]{Isaacs}, $\chi=\mu^G\in \Irr(G).$ Then
$$\chi(1)=e\psi_0(1)=e\psi(1)=eq^{a(H)(k-1)/k},$$ where $e\mid k!.$
Since $k\leq 4,$ we deduce that $e\mid 4!=24,$ so
$1<\chi(1)<q^{a(H)}.$ In addition $\chi(1)$ is divisible by $q^{\lfloor a(H)/2\rfloor}.$
As $\ell_1$ and $\ell_2$ are primitive prime divisors of $\Phi_{m_1}$
and $\Phi_{m_2},$ where $m_1>m_2$ are the two largest integers such that $\Phi_{m_i}$ divides $|H|$, it follows from Lemma \ref{Zsigmondy}
that $\ell_1\geq m_1+1$ and $\ell_2\geq m_2+1$. In particular, $\ell_1,\ell_2\ge 11$. So
$(\ell_1\ell_2,\chi(1))=1.$ By part $(i)$ of Lemmas \ref{2E6}--\ref{E8},  $\chi(1)=\frac{1}{t}q^{c(H)}\alpha,$ where $t=3,4,3,2,6$ and $c(H)=7,4,7,11,16$, respectively and $\alpha$ is an integer coprime to $p$. However, this is a contradiction since  $\chi(1)$ is divisible by $q^{\lfloor a(H)/2\rfloor}>q^{c(H)}$. Thus  $k=1$ and $r=p.$

\smallskip

(4) {Eliminating finite simple groups of Lie type in characteristic $p$.} We show that if $G'/M\cong S$ is a simple group of Lie type in
characteristic $p$ and $S\neq {}^2{\rm F}_4(2)',$ then $S\cong H.$ We
prove this by eliminating other possibilities of $S.$ Assume
that $S$ is a simple group of Lie type in characteristic $p$ and $S$
is not the Tits group. We have shown that $G'/M\cong S$ and
$|S|_p=|H|_p=q^{a(H)}.$ Observe that if $\theta\in \Irr(S)$ is
extendible to ${\Aut}(S),$ then $\theta$ extends to $G/C,$ where
$C/M=C_{G/M}(G'/M),$ so $\theta(1)\in \cd(G/C)\subseteq
\cd(G).$ In fact, we choose $\theta$ to be a unipotent
character of $S.$ By  \cite[Theorem $2.5$]{Malle08}, $\theta$ is
extendible to ${\Aut}(S)$ apart from some exceptions. We refer to
\cite[$13.8,13.9$]{Car85} for the classification of unipotent
characters and the notion of symbols. In Table \ref{Tab1}, for each
simple group of Lie type $S$ in characteristic $p,$ we list the
$p$-part of  some unipotent character  of $S$ that is extendible to
${\Aut}(S).$

\smallskip

{\bf Subcase }$H\cong \textrm{F}_4(q),q>2.$

$(a)$ Case $S\cong \LL_n^\epsilon(p^b),$ with $n\geq
2.$ Here $bn(n-1)=48f.$ If $n=2$ then $b=24f$, so
$S=\LL_2(q^{24})$. Hence $S$ has a character of degree $q^{24}+1.$
Obviously this degree divides no degree of $G$ since $q^{24}+1\nmid
|{\rm F}_4(q)|.$ If $n=3$ then $b=8f$ and so $S=\LL_3^\epsilon(q^8).$
By \cite[$(13.8)$]{Car85}, $S$ possesses a unipotent character
parametrized by the partition $(1,2)$ of degree $q^8(q^8+\epsilon
1).$ Since $q^8+1$ does not divide the order of $H$, the case
when $\epsilon=+$ cannot occur. Now assume $\epsilon=-.$ Then
$q^8(q^8+\epsilon 1)=q^8(q^8-1)=q^8\Phi_1\Phi_2\Phi_4\Phi_8,$ which
is coprime to $\ell_3\ell_4,$ so  $q^8(q^8-1)$ must be
one of the degrees given in case $(iv)$ of Lemma \ref{F4}. However
by comparing the power of $q,$ we see that this case is impossible.
If $n=4,$ then $b=4f$ so $S=\LL_4^\epsilon(q^4).$ In this case,
the unipotent character parametrized by the partition $(2,2)$ has
degree $q^8(q^8+1).$ As above, this degree does not belong to
$\cd(G).$ Thus we  assume that $n\geq 5.$ By Table
\ref{Tab1}, $S$ possesses a unipotent character $\chi\neq \St_S$  with $\chi(1)_p=p^{b(n-1)(n-2)/2}.$ By
Lemma \ref{F4}$(vi)$,  if $p$ is odd, then  $b(n-1)(n-2)/2\leq 16f$; else
$b(n-1)(n-2)/2\leq 13f$. Assume first that $p$ is
even. Multiplying both sides of the inequality $b(n-1)(n-2)/2\leq
13f$ by $n,$ we obtain $$bn(n-1)(n-2)/2=24f(n-2)\leq 13nf,$$ and so
$24(n-2)\leq 13n.$ Thus $11n\leq 48$ so $ n\leq 4,$ a
contradiction. Thus $p$ is odd. Using the same argument with
$b(n-1)(n-2)/2\leq 16f,$ we obtain that $n\leq 6.$ Hence $5\leq
n\leq 6$ and $p$ is odd. Assume that $n=5.$ Then $5b=12f.$ Since the primitive prime divisor $\ell_{4b}(p)$ exists,
$\ell_{4b}(p)\mid |S|,$ and hence $\ell_{4b}(p)\mid |H|.$ Since
$4b=48f/5>8f$, $4b\mid 12f=5b,$ which is
impossible. Assume $n=6.$ Then $5b=8f$ and the primitive prime
divisor $\ell_{6b}(p)\in\pi(S)\subseteq \pi(H)$ exists. As
$6b>5b=8f$,  $6b\mid 12f,$ and hence $b\mid 2f$. It follows that $5b=8f\mid 10f,$ which is impossible.

$(b)$ Case $S\cong \SSS_{2n}(q_1),$ or $\OO_{2n+1}(q_1),$ where
$q_1=p^b,$  $n\geq 2$ and $S\neq \SSS_4(2).$ Here
$bn^2=24f.$ If $n=2$ then $b=6f$ and so $q_1=q^6.$ By
\cite[$(13.8)$]{Car85}, $S$ possesses a unipotent character labeled
by the symbol $\binom{0\:1\:2}{\:\:-\:\:}$ of degree
$\frac{1}{2}q^6(q^{6}-1)^2.$ Now
$\frac{1}{2}q^6(q^6-1)^2\neq q^{24}$ and it is coprime to both
$\ell_1$ and $\ell_2$. By Lemma \ref{F4}(i), 
$\frac{1}{2}q^6(q^6-1)^2=\frac{1}{4}q^4\Phi_1^4\Phi_2^4\Phi_3^2\Phi_6^2.$
This equation is impossible by comparing the power of $p.$

If $n=3$ then $3b=8f$ and so $q_1^{3}=q^8.$ By
\cite[$(13.8)$]{Car85}, $S$ possesses a unipotent character $\chi$
labeled by the symbol $\binom{1\:2}{\:1\:}$ with degree
$${q_1^3(q_1^6-1)}/({q_1^2-1})={q^8(q^{16}-1)}/({q^{16/3}-1}).$$ However $\chi(1)$ is not a degree of $G,$
since $\ell_{16}(q)\mid \chi(1)$ but  $\ell_{16}(q)\nmid |H|.$

If $n=4$ then $2b=3f$ so $b\geq 3.$ Hence
$\ell_{6b}(p)\in\pi(S)\subseteq \pi(H),$ and $6b=9f>8f,$ and we deduce
that $6b=9f\mid 12f,$ which is impossible.

If $n=5$ then $25b=24f.$ Now $10b=240f/25=48f/5>9f$ so
$\ell_{10b}(p)\in\pi(S)\subseteq \pi(H),$ hence $10b\mid 12f,$ which
implies that $5b\mid 6f$ and so $25b=24f\mid 30f,$ which is
impossible.

Assume $n\geq 6.$ By Table \ref{Tab1}, $S$
possesses a nontrivial irreducible character $\chi\neq \St_S$  with $\chi(1)_p=p^{b(n-1)^2}/(2,p).$ Assume
that $p=2.$ Then $\chi(1)_p=p^{b(n-1)^2-1}$. By Lemma
\ref{F4}$(vi),$  $$bn(n-2)\leq b(n-1)^2-1 \leq 13f.$$
Multiplying both sides by $n$ and simplifying,  $11n\leq 48$
so $n<5,$ a contradiction. Now assume $p>2.$ Then
$\chi(1)_p=p^{b(n-1)^2}$. By Lemma \ref{F4}$(vi),$  $b(n-1)^2\leq 16f.$ Multiplying both sides by $3,$ 
$$3b(n-1)^2\leq 48f=2bn^2.$$ After simplifying,  $n(n-6)+3\leq
0,$ which is a contradiction as $n\geq 6.$

$(c)$ Case $S\cong \OO_{2n}^\epsilon(q_1),$ where $q_1=p^b,$ and $n\geq 4.$ Here $bn(n-1)=24.$ If $n=4,$ then $b=2f$ hence
$q_1=q^2.$ If $S=\OO_8^+(q^2),$ then $S$ has a unipotent character
$\chi$  labeled by the symbol in Table \ref{Tab1} with
$\chi(1)=q^{14}\Phi_8^2$. If $S=\OO_8^-(q^2),$ then $S$ has a
unipotent character $\chi$ labeled by the symbol $\binom{1~3}{~-}$
with degree $\chi(1)={(q^8+1)(q^6-q^2)}/{(q^4-1)}.$ However, these
degrees do not divide $|H|.$ If $n=5,$ then $5b=6f.$ Since $8b=48f/5>9f$, 
$\ell_{8b}(p)\in\pi(S)\subseteq\pi(H),$ so $8b\mid 12f=10b,$
which is impossible. If $n=6,$ then $5b=4f.$ Now
$8b=32f/5>6f$ and so $\ell_{8b}(p)\in\pi(S)\subseteq\pi(H),$ thus
$8b\mid 8f=10b$ or $8b\mid 12f=15b.$ Both cases are
impossible. Thus we can assume that $n\geq 7.$ By Table \ref{Tab1},
$S$ possesses a unipotent character $\chi\neq \St_S$  with $\chi(1)_p\geq p^{b(n-1)(n-2)}.$ By Lemma
\ref{F4}$(vi),$  $b(n-1)(n-2)\leq 16f.$ Multiplying both
sides by $n,$ and simplifying,   $8n\leq 48$ and hence
$n\leq 6,$ a contradiction.

$(d)$ Case $S\cong {\rm F}_4(p^b),$ where $b\geq 1.$ Then $24b=24f,$ hence
$b=f$ so $S\cong H.$

$(e)$ Case $S\cong {\rm G}_2(q_1),$ where $q_1=p^b$.  Here
$6b=24f$ and so $b=4f.$ Thus $S={\rm G}_2(q^4),$ so $S$ has an
irreducible character of degree $q^{24}-1$ by \cite{Chang}. This degree divides no degrees of $H.$

$(f)$ Case $S\cong {}^3{\rm D}_4(p^b)$. Then $12b=24f$
and hence $b=2f.$ By \cite[13.9]{Car85}, $S$ possesses a unipotent
character $\chi$ labeled by the symbol $\phi_{1,3}''$ with degree
$\chi(1)=q^{14}\Phi_{24}.$ But then this degree does not divide the
order of $H.$

$(g)$ Case $S\in \{{}^2{\rm B}_2(q_1^2),{}^2{\rm G}_2(q_1^2),{}^2{\rm F}_4(q_1^2)\},$
where $q_1^2=2^{2n+1},3^{2n+1},2^{2n+1}$ with $n\geq 1,$
respectively. For each case, the equations $|S|_p=p^{24f}$ is
impossible.

$(h)$ For the remaining cases, we argue as follows. In all cases
$|S|_p=p^{24f}.$ Lemma \ref{F4}$(vi)$ yields that $\chi(1)_p\leq
p^{16f},$ where $\chi$ is the unipotent character listed in Table
\ref{Tab1}. Using these two properties, we obtain a
contradiction. For instance, assume that $S\cong {\rm E}_8(p^b),$ where
$b\geq 1.$ Then $120b=24f$ and hence $5b=f.$ By Table \ref{Tab1},
$S$ possesses a unipotent character $\chi\neq \St_S$ with $\chi(1)_p\geq p^{91b}.$ By Lemma
\ref{F4}$(vi),$  $91b\leq 16f=80b,$ a contradiction. 

\smallskip

{\bf Subcase} $H\cong {}^2\textrm{E}_6(q).$

$(a)$ Case $S\cong \LL_n^\epsilon(p^b),$ with $n\geq
2.$ Here $bn(n-1)=72f.$ If $n=2$ then $b=36f$ so
$S=\LL_2(q^{36})$ and hence $S$ has a character of degree $q^{36}+1.$
However this is impossible as this number does not divide the order
of $H.$ Hence we  assume that $n\geq 3.$ If $(n,q)=(6,2)$ or
$(3,2),$ then $bn(n-1)\neq 72f$.
Thus $\ell_{\epsilon bn}(p)\in\pi(S)\subseteq\pi(H)$ always exists
by Lemma \ref{Zsigmondy}, and so $bn\leq 18f.$ Multiplying both
sides by $n-1$ and simplifying, $n\geq 5.$ By Table
\ref{Tab1}, $S$ possesses a unipotent character $\chi\neq\St_S$ with $\chi(1)_p=p^{b(n-1)(n-2)/2}.$ By
Lemma \ref{2E6}(vi),  $b(n-1)(n-2)/2\leq 25f.$ Multiplying
both sides of the latter inequality by $n$ and simplifying,  $ n\leq 6.$ If $n=5,$ then $5b=18f$ and hence
$4b=72f/5>14f$ so $\ell_{4p}(p)\in\pi(S)\subseteq\pi(H)$ exists
and thus $4b\mid 18f=5b,$ which is impossible. Similarly if $n=6,$
then $5b=12f$ and hence $6b=72f/5>14f$ so
$\ell_{6p}(p)\in\pi(S)\subseteq\pi(H)$ exists and thus $6b\mid 18f.$
Hence $12b\mid 36f=15b,$ a contradiction.

$(b)$ Case $S\cong \SSS_{2n}(q_1),$ or $\OO_{2n+1}(q_1),$ with $q_1=p^b,$
 $n\geq 2$ and $S\neq \SSS_4(2).$ Now $bn^2=36f.$ If
$p=2$ and $2nb=6,$ then $n=3,b=1$ and so $bn^2=9=36f,$ a
contradiction. Thus $\ell_{2bn}(p)\in\pi(S)\subseteq\pi(H)$ exists.
Hence $2bn\leq 18f.$ Multiplying both sides by $n$ and simplifying,
 $n\geq 4.$  By Table \ref{Tab1}, $S$ possesses a
nontrivial irreducible character $\chi\neq\St_S$ with $\chi(1)_p\geq p^{bn(n-2)}$. By Lemma
\ref{2E6}$(vi)$,  $bn(n-2) \leq 25f.$ Multiplying both
sides by $n$ and simplifying, $11n\leq 72$ so $n\leq
6.$ If $n=4,$ then $4b=9f$. Since $6b=54f/4>12f$,
$\ell_{6p}(p)\in\pi(S)\subseteq\pi(H)$ exists and thus $6b\mid
18f=8b,$ which is impossible. Similarly if $n=5,$ then $25b=36f$. Since $10b=360f/25>14f$,
$\ell_{10p}(p)\in\pi(S)\subseteq\pi(H)$ exists and thus $10b\mid
18f.$ It follows that $20b\mid 36f=25b,$ which is impossible. Thus
$n=6$ and so $b=f.$ By \cite{Car85}, $S$ possesses a unipotent
character $\chi^\alpha$ labeled by the symbol
$\alpha=\binom{0~1~6}{~-~}$ with degree
$\chi^\alpha(1)=(q^6-1)(q^6-q)/(2(q+1)).$ Obviously $\chi^\alpha(1)$
is nontrivial and different from $|H|_p$ and furthermore it is coprime
to $\ell_i$ for $i=1,2,3,$ contradicting Lemma \ref{2E6}$(i)$.

$(c)$ Case $S\cong \OO_{2n}^\epsilon(q_1),$ where $q_1=p^b,$ and $n\geq 4.$ Here $bn(n-1)=36f.$ By Table \ref{Tab1}, $S$
possesses a unipotent character $\chi\neq\St_S$ with $\chi(1)_p\geq p^{b(n-1)(n-2)}.$ By Lemma
\ref{2E6}$(vi)$,  $b(n-1)(n-2)\leq 25f.$ Multiplying both sides
by $n,$ and simplifying,  $n\leq 6.$ If $n=4,$ then $b=3f$
and hence $q_1=q^3.$ By \cite{Car85}, $S$ possesses a unipotent
character $\varphi$ labeled by the symbol $\binom{1~n}{0~1}$ when
$\epsilon=+$ and $\binom{0~1~n}{~~1~~}$ when $\epsilon=-$ of degree
$$(q_1^{2n}-q_1^2)(q_1^2-1)=(q_1^8-q_1^2)/(q_1^2-1)=q^6\Phi_9\Phi_{18}.$$
It follows that $\varphi(1)\in\cd(L)-\{1,q^{36}\}$ and
$(\varphi(1),\ell_2\ell_3)=1.$ By Lemma \ref{2E6}$(ii)$, $\varphi(1)$
is one of the degrees in $(ii).$ Both cases are
impossible by comparing the power on $q.$ If $n=5,$ then $5b=9f$ and
hence $8b=72f/5>14f$ so $\ell_{8p}(p)\in\pi(S)\subseteq\pi(H)$
exists and thus $8b\mid 18f=10b,$ which is impossible. Thus $n=6$
and so $5b=6f.$  As when $n=4,$ $S$ possesses a unipotent
character $\varphi$ of degree
$$(q_1^{12}-q_1^2)/(q_1^2-1)=q_1^2(q_1^{10}-1)/(q_1^2-1)=q_1^2(q^{12}-1)/(q_1^2-1).$$
It follows that $\varphi(1)\in\cd(L)-\{1,q^{36}\}$ and
$(\varphi(1),\ell_1\ell_3)=1,$ contradicting Lemma
\ref{2E6}$(iii)$.

$(d)$ Case $S\cong {\rm G}_2(p^b),$ where  $b\geq 1.$ Here $6b=36f.$ By \cite{Chang}, $S$ possesses an
irreducible character of degree $p^{6b}-1=q^{36}-1.$ However $q^{36}-1$ does not divide $|H|,$
 a contradiction.

$(e)$ Case  $S\in\{ {}^2{\rm B}_2(q_1^2),{}^2{\rm G}_2(q_1^2),$ where $q_1^2=2^{2n+1},3^{2n+1},$ respectively with
$n\geq 1.$ In these cases, the equation $|S|_p=|H|_p$ cannot occur.

$(f)$ Case  $S\cong {}^2{\rm F}_4(q_1^2),$ where $q_1^2=2^{2n+1},$ with
$n\geq 1.$ Then $12(2n+1)=36f.$ Now
$\ell_{12(2n+1)}(2)\in\pi(S)\subseteq \pi(H),$ which implies that
$12(2n+1)=36f\leq 18f,$ a contradiction.

$(h)$ Case $S\cong {\rm F}_4(p^b)$. Then $24b=36f$ and so $2b=3f.$ By
\cite[13.9]{Car85}, $S$ possesses a unipotent character $\varphi$ labeled by the symbol
$\phi_{9,2}$ with $\varphi(1)=q_1^2\Phi_3^2(q_1)\Phi_6^2(q_1)\Phi_{12}(q_1)=q^3\Phi_9^2\Phi_{18}.$
Hence $\varphi(1)\in\cd(L)-\{1,q^{36}\}$ and $(\varphi(1),\ell_1\ell_3)=1,$ 
contradicting Lemma \ref{2E6}$(iii)$.

$(i)$ Case $S\cong {}^3{\rm D}_4(p^b)$. Then $12b=36f.$
Now $\ell_{12b}(p)\in\pi(S)$ but
$\ell_{12b}(p)=\ell_{36f}(p)\not\in\pi(H)$, a contradiction.

$(j)$  Case $S\cong {}^2{\rm E}_6(p^b)$. Then $36b=36f$
and hence $b=f$ so $S\cong H.$

$(k)$ Case $S\cong {\rm E}_7(p^b)$. Then $63b=36f$ and hence $7b=4f.$ By Table
\ref{Tab2}, $S$ possesses a unipotent character $\varphi\neq \St_S$  with $\varphi(1)_p=p^{46b}.$ By Lemma \ref{2E6}$(vi)$,  $46b\leq 25f,$ which
implies that $184b=4\cdot 46b\leq 100f=175b,$ which is impossible.

$(l)$  Case $S\cong {\rm E}_8(p^b)$. Then $120b=36f$ and hence $10b=3f.$
By Table \ref{Tab2} and Lemma \ref{2E6}$(vi)$,  $91b\leq 25f.$ It follows that $273b\leq 25\cdot 3f=250b,$ a contradiction.

$(m)$  Case $S\cong {\rm E}_6(p^b)$.  Then $36b=36f$ and
hence $b=f.$ By \cite[13.9]{Car85}, $S$ has a unipotent character
$\varphi$ labeled by the symbol $\phi_{6,1}$ with degree
$q\Phi_8\Phi_9.$ Now $\varphi(1)\in\cd(L)-\{1,|H|_p\}$  is
coprime to $\ell_1\ell_2\ell_4,$  contradicting Lemma
\ref{2E6}$(i)$.


The arguments for the remaining simple groups ${\rm E}_6(q), {\rm E}_7(q)$ and ${\rm E}_8(q)$ are similar, so we skip the details.
\end{proof}

\section{Verifying Step 3: $G'$ is isomorphic to $H$}\label{sec6}

We prove the following result in this section.
\begin{prop}\label{prop:step3}
Let $G$ be a finite group and let $H$ be one of the finite simple groups in \eqref{eqn1}.  Let $M\leq G'$ be a normal subgroup of $G$ such that $G'/M$ is a chief factor of $G$. Then $G'\cong H.$
\end{prop}

\begin{proof}
By Propositions \ref{prop:step1} and  \ref{prop:step2}, $G'$ is perfect and $G'/M\cong H.$ It suffices to show that $M=1.$
By way of contradiction, assume that $M$ is nontrivial. Let $K\leq
M$ be a normal subgroup of $G'$ such that $M/K$ is a chief factor of
$G'.$ Assume first that $M/K$ is nonabelian. Then $M/K\cong S^k,$ where $k\geq 1$ and $S$ is a nonabelian finite simple
group. By Lemmas \ref{lem5} and \ref{lem:ext}, $M/K$ possesses a
nontrivial irreducible character $\varphi\in\Irr(M/K)$ which extends
to $\varphi_0\in\Irr(G').$ As $G'/M\cong H,$ it possesses an
irreducible character $\chi$ of degree $|H|_p,$ and hence by
Gallagher's Theorem,  $\varphi_0\chi$ is an irreducible character of
$G'$ of degree $\varphi_0(1)|H|_p>|H|_p.$ As $G'\unlhd G,$  $\varphi_0(1)|H|_p$ divides some degree of $G,$ 
contradicting \cite[Lemma~2.4]{Tong}. Thus $M/K$ is a 
minimal normal elementary abelian subgroup of $G'/K.$ Let $C/K$ be
the centralizer of $M/K$ in $G'/K.$ As $M/K$ is a minimal normal
elementary abelian subgroup of $G'/K,$  we deduce that $M/K\leq
C/K\unlhd G'/K.$ In particular, $M\unlhd C\unlhd G'.$ As $G'/M$ is
nonabelian simple, either $C=G'$ or $C=M.$

Assume  $C=M.$ Then $M/K$ is a self-centralizing minimal
normal elementary abelian subgroup of $G'/K$ and $G'/M\cong H.$
Let $\chi\in \Irr(G'/K)$ be such that $\chi(1)$ is the largest
degree of $G'/K.$  Applying Lemma \ref{lem8} to $M/K\unlhd G'/K,$ we obtain $\chi(1)>b(H).$ Now
$\chi(1)\in\cd(G')$ divides some degree of $H$ and thus
$\chi(1)\leq b(H)$, a contradiction.

Assume  $C=G'$. Then $M/K=Z(G'/K).$ Since $G'$ is
perfect,  $G'/K$ is perfect and so $M/K=(G'/K)'\cap
Z(G'/K),$ which implies that $G'/K$ is a central extension of $G'/M$
(see \cite[p.~629]{HuppI}).  We now consider each case separately.

\smallskip
{\bf Subcase} $H\cong \textrm{F}_4(q), (q>2)$ or ${\rm E}_8(q)$.
 By \cite[Theorem~5.1.4]{KL}, the Schur multiplier of $G'/M\cong H$ is
trivial, so $G'/K\cong G'/M,$ which implies that $M/K$
is trivial, a contradiction.

\smallskip
{\bf Subcase} $H\cong {}^2\textrm{E}_6(q).$
 By \cite[Theorem~5.1.4]{KL}, the Schur multiplier of $G'/M\cong H$  is cyclic of order
$d=(3,q+1).$ If  $d=1,$ then $G'/K\cong H$ so $M/K$ is trivial,
a contradiction. Hence $d=3.$ Assume first that $q>2.$ Then
${}^2{\rm E}_6(q)_{sc}$ is the universal covering group of $H$ so
$G'/K\cong {}^2{\rm E}_6(q)_{sc}.$ As $G'\unlhd G$ and $\cd(G'/K)\subseteq
\cd(G'),$ every degree of $G'/K$  divides some
degree of $G,$ which is impossible by Lemma \ref{2E6}$(x)$. Now assume
that $q=2.$ The Schur multiplier of ${}^2{\rm E}_6(2)$ is
isomorphic to $\Z_2\times \Z_6.$
If $G'/K$ is isomorphic to $2\cdot {}^2{\rm E}_6(2)$ or $6\cdot {}^2{\rm E}_6(2),$ then $G'/K$ possesses an
irreducible character of degree $2^7\cdot 3^5\cdot 5^2\cdot 7^2\cdot 11\cdot 13\cdot 19$ which
divides no degree of ${}^2{\rm E}_6(2)$. If $G'/K\cong 3\cdot {}^2{\rm E}_6(2),$ then $G'/K$ has an
irreducible character of degree $2^{11}\cdot 3^3\cdot 5\cdot 7^2\cdot 13\cdot 17\cdot 19.$
Thus this case cannot occur.

The remaining cases can be argued similarly, we skip the details.
\end{proof}


\section{Verifying Step 4: $G=G' \times C_G(G')$ }\label{sec7}

In this section, we  complete the proof of the main theorem.
We first recall the setup in Section \ref{sec3}. Let $\mathcal{G}$ be a
simple simply connected algebraic group in characteristic $p$ and $F$ is a Steinberg endomorphism such that
$\mathcal{G}^F\cong H_{sc}=L,$ where $H\cong L/Z(L)$ is one of the simple groups in \eqref{eqn1}. Denote by $\mathcal{G}^*$ the dual group of $\mathcal{G}$ and let
$F^*$ be the corresponding Frobenius automorphism. Then $H^*:=(\mathcal{G}^*)^{F^*}\cong
H_{ad}.$  
\begin{prop}\label{prop:step4}
Let $G$ be a finite group and let $H$ be one of the finite simple groups in \eqref{eqn1}.  Suppose that $\cd(G)=\cd(H).$ Then $G\cong G'\times C_G(G').$
\end{prop}

\begin{proof}
By Proposition \ref{prop:step3},  $G'\cong H$.
Let $C=C_G(G').$ Then $G/C$ is almost simple with socle $H$ and $G'\cap C=1$. If $G=G'C$, then $G\cong G' \times C$ and we are done. So, assume by contradiction that $G'\times C<G.$ Then $G$ induces some outer automorphism, say $\sigma$,
on $H.$

\smallskip
{\bf Subcase} $H\cong \textrm{F}_4(q),q>2.$ In this case, $H_{sc}\cong H\cong H_{ad}.$
When $p$ is odd, $\textrm{Out}(H)$ is a cyclic
group of order $f,$  consisting of field automorphisms, and when $p$
is even, $\textrm{Out}(H)$ is a cyclic group of order $2f$ with
generator $\gamma$ such that $\gamma^2$ is a generator for the group
of field automorphisms of $H.$

Assume that $\sigma$ is a nontrivial field automorphism of
$H.$  Let
$\ell=\ell_{12f}(p)$ be a primitive prime divisor of $p^{12f}-1$ and
let $s$ be a semisimple element of order $\ell$ in
$(\mathcal{G}^*)^{F^*}\cong H.$ Now
$|C_{(\mathcal{G}^*)^{F^*}}(s)|=\Phi_{12}(q)$ and $H$ possesses an
irreducible character $\chi_s$ with degree
$$|(\mathcal{G}^*)^{F^*}:C_{(\mathcal{G}^*)^{F^*}}(s)|_{p'}=\Phi_1^4\Phi_2^4\Phi_3^2\Phi_4^2\Phi_6^2\Phi_8.$$
Let $\sigma^*$ be an automorphism of $\mathcal{G}^*$ induced from
the field automorphism $\sigma.$ Then
$(\mathcal{G}^*)^{\sigma^*}\cong {\rm F}_4(p^b),$ where $1\leq b<f,$ so
$\ell=\ell_{12f}(p)$ does not divide
$|(\mathcal{G}^*)^{\sigma^*}|$. By
\cite[Lemma~2.5(ii)]{Dolfi}, $\chi_s$ is not
$\sigma$-invariant, and hence some proper multiple of $\chi_s(1)$ is
a degree of $H,$ contradicting Lemma \ref{F4}$(v)$.

Now assume that $\sigma$ is not a field automorphism of $H.$ Then
$p=2$ and $\sigma=\gamma^k$ for some odd integer $1\leq k\leq f,$
where $\gamma$ is a generator of $\Out(H).$ Let
$\tau=\sigma^2=\gamma^{2k}.$ If $\tau$ is nontrivial, then it is a
field automorphism of $H$. By applying the same argument as above,
we obtain that $\chi_s$ is not $\tau$-invariant and so obviously
$\chi_s$ is not $\sigma$-invariant, a contradiction.
Hence  $\sigma^2=1,$ which forces $f=k$ is odd. Thus
$\sigma$ is a graph automorphism of order $2$ of ${\rm F}_4(2^f),$ where
$f\geq 2$ is odd. By \cite[Theorem~2.5(e)]{Malle08}, $H$ has two
unipotent characters labeled by the symbol $\phi_{2,16}'$ and
$\phi_{2,16}''$ with degree
$\frac{1}{2}q^{13}\Phi_4\Phi_8\Phi_{12},$ which are fused under
$\sigma.$ Therefore $G/C$ possesses an irreducible character of
degree $q^{13}\Phi_4\Phi_8\Phi_{12}.$ However this contradicts Lemma
\ref{F4}$(vi)$ as $q$ is even.

\smallskip
{\bf Subcase} $H\cong {}^2\textrm{E}_6(q).$
Note that $\textrm{Out}(H)$ is a group of
order $2df,$ where $d=(3,q+1).$
Assume first that $G/C$ possesses only inner and diagonal
automorphism. It follows that $G/C\cong {}^2{\rm E}_6(q)_{ad},$ $3\mid (q+1)$  and
$\sigma$ is a diagonal automorphism of $H.$ Thus
$\cd({}^2{\rm E}_6(q)_{ad})=\cd(G/C)\subseteq\cd(G)=\cd(H),$ which is
impossible by Lemma \ref{2E6}$(x)$.

Hence we can assume that $\sigma=\gamma\varrho,$ where $\varrho$ is
a field automorphism and $\gamma$ is either trivial or a diagonal
automorphism. 
Let $\ell=\ell_{18f}(p)$ be a primitive prime divisor of $p^{18f}-1$
and let $s$ be a semisimple element of order $\ell$ in
$(\mathcal{G}^*)^{F^*}.$ Now
$|C_{(\mathcal{G}^*)^{F^*}}(s)|=\Phi_{18}$ and $\mathcal{G}^F$
possesses an irreducible character $\chi_s$ with degree
$$|(\mathcal{G}^*)^{F^*}:C_{(\mathcal{G}^*)^{F^*}}(s)|_{p'}=|{}^2{\rm E}_6(q)_{ad}|_{p'}/\Phi_{18}$$
which acts trivially on its center, so $\chi_s\in\Irr(H).$  Let
$\varrho^*$ be an automorphism of $\mathcal{G}^*$ induced from the
field automorphism $\varrho.$ Then
$(\mathcal{G}^*)^{\varrho^*}\cong {}^2{\rm E}_6(p^b)_{ad},$ where $1\leq b<f,$
so $\ell=\ell_{18f}(p)$ does not divide
$|(\mathcal{G}^*)^{\varrho^*}|,$ and hence by
\cite[Lemma~2.5(ii)]{Dolfi},  $\chi_s$ is not
$\varrho$-invariant. As $\chi_s$ is invariant under the diagonal
automorphisms,  $\chi_s$ is not $\sigma$-invariant and
so some proper multiple of $\chi_s(1)$ is a degree of $H,$ which
is impossible.

\smallskip
{\bf Subcase} $H\cong \textrm{E}_6(q).$
Note that $\textrm{Out}(H)\cong \la\gamma\ra (\la\varrho\ra\times \la\tau\ra))$ is a group of order
$2df,$ where $d=(3,q-1),$ and $\gamma$ is a diagonal automorphism of order $d,$ and $\varrho$ is a field
automorphism of order $f$ and $\tau$ is a graph automorphism of order $2$ (see for example
\cite[Theorem~2.5.12]{Gorenstein}). Moreover $\tau$ inverts the cyclic group $\la\gamma\ra.$

Assume first that $G/C$ possesses only inner and diagonal
automorphism. It follows that $G/C\cong {\rm E}_6(q)_{ad},$ $3\mid (q-1)$  and
$\sigma$ is a diagonal automorphism of $H.$ Thus 
$\cd({\rm E}_6(q)_{ad})=\cd(G/C)\subseteq\cd(G)=\cd(H),$ which is
impossible by Lemma \ref{E6}$(x)$.

Assume next that $\sigma=\gamma\varrho,$ where $\varrho$ is a nontrivial field
automorphism and $\gamma$ is either trivial or a diagonal automorphism.  Let $\ell=\ell_{12f}(p)$ be a primitive prime divisor of $p^{12f}-1$ and let $s$ be a
semisimple element of order $\ell$ in $K\cong {\rm F}_4(q)$ where $K\leq (H^*)'$ is chosen so that it is
fixed under the involutory graph $\tau$ (see \cite[Proposition~4.9.2]{Gorenstein}).  Now $|C_{(\mathcal{G}^*)^{F^*}}(s)|=\Phi_{12}\Phi_3$ and $\mathcal{G}^F$ possesses an irreducible
character $\chi_s$ with degree
$$|(\mathcal{G}^*)^{F^*}:C_{(\mathcal{G}^*)^{F^*}}(s)|_{p'}=|{\rm E}_6(q)_{ad}|_{p'}/\Phi_{12}\Phi_3$$ which is
trivial on the center as $s\in (H^*)'$ so $\chi_s\in\Irr(H).$  Let $\varrho^*$ be an automorphism of
$\mathcal{G}^*$ induced from the field automorphism $\varrho.$ Then
$(\mathcal{G}^*)^{\varrho^*}\cong {\rm E}_6(p^b)_{ad},$ where $1\leq b<f,$ so $\ell=\ell_{12f}(p)$ does
not divide $|(\mathcal{G}^*)^{\varrho^*}|$. By \cite[Lemma~2.5(ii)]{Dolfi}, 
$\chi_s$ is not $\varrho$-invariant. As $\chi_s$ is invariant under the diagonal automorphisms, it is not $\varrho$-invariant and hence some proper multiple of $\chi_s(1)$ is a
degree of $H,$ which is impossible.

Now assume that $\sigma=\gamma\varrho\tau,$ where $\gamma$ is either trivial or a diagonal
automorphism, $\varrho$ is a nontrivial field automorphism and $\tau$ is a graph automorphism. As the field and graph automorphisms commute by \cite[Theorem~2.5.12]{Gorenstein}, we
can write $\sigma=\gamma\tau\varrho.$ Choose  $s\in (H^*)'$ as in previous
paragraph with $|s|=\ell_{12}(q).$ Since $s$ in $\tau$-invariant, \cite[Corollary~2.5]{Navarro}
yields that $\chi_s^\tau=\chi_{\tau(s)}=\chi_s.$ As $\chi_s$ is also $\gamma$-invariant, by applying
\cite[Corollary~2.5]{Navarro} again $\chi_s$ is $\gamma\tau$-invariant. Thus
$\chi_s^\sigma=\chi_s^{\gamma\tau\varrho}=\chi_s^\varrho\neq \chi_s.$  Therefore $\chi_s$ is not
$\sigma$-invariant, which leads to a contradiction.

Finally assume that $\sigma=\gamma\tau,$ where $\gamma$ is either trivial or a diagonal
automorphism and $\tau$ is a graph automorphism. Let $\ell=\ell_{9f}(p)$ be a primitive prime
divisor of $p^{9f}-1$ and let $s$ be a semisimple element of order $\ell$ in  $ (H^*)'.$ Let $T$ be
a maximal torus of $H^*$ containing $s.$ It is well known that $T$ is uniquely determined up to
conjugation in $H^*.$  Now $|C_{(\mathcal{G}^*)^{F^*}}(s)|=\Phi_{9}$ and $\mathcal{G}^F$
possesses an irreducible character $\chi_s$ with degree
$$|(\mathcal{G}^*)^{F^*}:C_{(\mathcal{G}^*)^{F^*}}(s)|_{p'}=|{\rm E}_6(q)_{ad}|_{p'}/\Phi_{9},$$ which is
trivial on the center as $s\in (H^*)'$ so $\chi_s\in\Irr(H).$  As $\chi_s$ is invariant under the
diagonal automorphisms,  it suffices to show that $\chi_s$ is not $\tau$-invariant: if so, then it is not
$\sigma$-invariant and hence some proper multiple of $\chi_s(1)$ is a degree of $H,$ which is
impossible. We know that $\Centralizer_{H^*}(\tau)\cong \textrm{F}_4(q)$ and thus $\ell\nmid
|\Centralizer_{H^*}(\tau)|.$ Consider the $H^*$-conjugacy class $(s).$ Since $|s|=\ell,$  $H^*$-conjugacy class $(s)$ is not $\tau$-invariant. By \cite[Corollary~2.5]{Navarro},
$\chi_s$ is not $\tau$-invariant.

The arguments for the remaining cases are similar. 
\end{proof}

We now complete the proof of the main theorem.
\begin{proof}[\textbf{Proof of Theorem \ref{th:main}}]
Let $G$ be a finite group and let $H$ be a finite simple exceptional group of Lie type.
In view of \cite{Hupp,Tong,HT1,HT2,HT,Wake}, we assume that $H$ is one of the simple groups listed in \eqref{eqn1}. By Propositions \ref{prop:step3} and \ref{prop:step4},  $G'\cong H$ and  $G\cong G' \times C_G(G')$. It follows that $C_G(G')\cong G/G'$ is abelian. Hence $G\cong G'\times A$, where $A= C_G(G')$ is an abelian group. This completes the proof of the theorem. 
\end{proof}

\subsection*{Acknowledgment}
The author is grateful to  the reviewer and the editor for numerous suggestions and corrections that improve the exposition of the paper. The author also thanks Frank L\"{u}beck for his help during the preparation of this paper.


\begin{thebibliography}{99}
\bibitem{BTZ} C.  Bessenrodt, H. P. Tong-Viet, J. Zhang,  Huppert's conjecture for alternating groups, \emph{J. Algebra} \textbf{470} (2017), 353--378.


\bibitem{Bianchi} M. Bianchi, D. Chillag, M. L. Lewis, E. Pacifici, Character degree graphs that are complete graphs, \emph{Proc. Amer. Math. Soc.} \textbf{135} (2007), no. 3, 671--676.


\bibitem{BM}
Y. Bugeaud\ and\ P. Mih\u{a}ilescu, On the Nagell-Ljunggren equation $\frac{x^n-1}{x-1}=y^q$, \emph{Math. Scand}. {\bf 101} (2007), no.~2, 177--183. 

\bibitem{Car85} R. W. Carter, \textit{Finite Groups of Lie Type. Conjugacy classes and complex characters,}
Pure Appl. Math., Wiley-Interscience/John Wiley and Sons, New York, 1985.

\bibitem{Chang} B. Chang, R. Ree, {The characters of $G_2(q),$} \emph{Symposia
Mathematica (Convegno di Gruppi e loro Rappresentazioni, INDAM, Rome)} \textbf{13} (1972),
395--413.

\bibitem{ATLAS} J.~H. Conway, R. T. Curtis, S. P. Norton, R. A. Parker, R. A. Wilson, {\it $\Bbb{ATLAS}$ of finite groups}, Oxford Univ. Press, Eynsham, 1985.


\bibitem{DM} F. Digne and J.~C.~M. Michel, {\it Representations of finite groups of Lie type}, London Mathematical Society Student Texts, \textbf{21}, Cambridge Univ. Press, Cambridge, 1991.

\bibitem{Dolfi} S. Dolfi, G. Navarro, P.H. Tiep, {Primes dividing the degrees of the real characters,} \emph{Math. Z.} \textbf{259} (2008), no. 4, 755--774.

\bibitem{GM} M. Geck, G. Malle, {\it The character theory of finite groups of Lie type}, Cambridge Studies in Advanced Mathematics, \textbf{187}, Cambridge Univ. Press, Cambridge, 2020.


\bibitem{Gorenstein} D. Gorenstein, R. Lyons, R. Solomon, \emph{ The classification of the finite simple groups. Number 3. Part I. Chapter A. Almost simple K-groups}, Mathematical Surveys and Monographs, \textbf{40.3.} American Mathematical Society, Providence, RI, 1998.

\bibitem{Guralnick} R. Guralnick, P.H. Tiep, {The non-coprime $k(GV)$-problem,} \emph{J. Algebra} \textbf{293} (2005), no. 1, 185--242.

\bibitem{Hupp} B. Huppert, {Some simple groups which are determined by the set of their character degrees, I,}
\emph{Illinois J. Math.} \textbf{44} (2000), no. 4, 828--842.

\bibitem{HuppI} B. Huppert, \emph{Endliche Gruppen I,} Die Grundlehren der math. Wissenschaften, Band
\textbf{134} Springer-Verlag, Berlin, 1967.

\bibitem{HTW} N. N. Hung, H.~P. Tong-Viet\ and\ T.~P. Wakefield, Projective special linear groups ${\rm PSL}_4(q)$ are determined by the set of their character degrees, \emph{J. Algebra Appl}. {\bf 11} (2012), no.~6, 1250108, 26 pp. 

\bibitem{HMTW} N. N. Hung, P. R. Majozi, H. P. Tong-Viet, T. P. Wakefield, Extending Huppert's conjecture from non-abelian simple groups to quasi-simple groups, \emph{Illinois J. Math}. {\bf 59} (2015), no.~4, 901--924.

\bibitem{Isaacs} M. Isaacs, \emph{Character theory of finite groups,} AMS Chelsea Publishing, Providence, RI, 2006.


\bibitem{KM} 
Khukhro, E. I., and  Mazurov, V.D.:  \emph{The Kourovka notebook: Unsolved problems in group theory}, 20th ed. (2024), \url{https://kourovka-notebook.org/}.

\bibitem{KL} P. Kleidman, M.W. Liebeck, \emph{The subgroup structure of the finite classical groups,} {LMS Lecture Note Series \textbf{129}}, {Cambridge University Press}, 1990.


\bibitem{Lubeck1} F. L\"ubeck, Smallest degrees of representations of exceptional groups of Lie type, \emph{Comm. Algebra} {\bf 29} (2001), no.~5, 2147--2169; MR1837968

\bibitem{Lubweb} F. L\"{u}beck,  \url{https://www.math.rwth-aachen.de/~Frank.Luebeck/chev/DegMult/index.html}.


\bibitem{Malle08} G. Malle, {Extensions of unipotent characters and the inductive McKay condition,}
\emph{J. Algebra} \textbf{320} (2008), no. 7, 2963--2980.

\bibitem{MTest} G. Malle and D.~M. Testerman, {\it Linear algebraic groups and finite groups of Lie type}, Cambridge Studies in Advanced Mathematics, \textbf{133}, Cambridge Univ. Press, Cambridge, 2011.

\bibitem{Malle} G. Malle, A. Zalesskii, {Prime power degree representations of quasi-simple groups,}
\emph{Arch. Math.} (Basel) \textbf{77} (2001), no. 6, 461--468.

\bibitem{Malle99} G. Malle, {Almost irreducible tensor squares,} \emph{Comm. Algebra} \textbf{27} (1999), no. 3, 1033--1051.


\bibitem{Navarro1} G. Navarro, The set of character degrees of a finite group does not determine its solvability, \emph{Proc. Amer. Math. Soc.} \textbf{143} (2015), no. 3, 989--990.


\bibitem{Navarro} G. Navarro, P.H. Tiep, A. Turull, {Brauer characters with cyclotomic field of values,} \emph{J. Pure Appl. Algebra} \textbf{212} (2008), no. 3, 628--635.

\bibitem{NR} G. Navarro, N. Rizo, Nilpotent and perfect groups with the same set of character degrees, \emph{J. Algebra Appl.} \textbf{13} (2014), no. 8, 1450061, 3 pp.


\bibitem{Tong24} H. P. Tong-Viet, A characterization of some finite simple groups by their character codegrees, submitted. \url{https://arxiv.org/abs/2406.17794}.


\bibitem{Tong} H.P. Tong-Viet, {The simple Ree groups ${}^2F_4(q^2)$ are determined by the set of their character degrees,}
\emph{J. Algebra} \textbf{339} (2011) no. 1, 357--369.




\bibitem{HT1} H.P. Tong-Viet, T.P. Wakefield, {On Huppert's Conjecture for $G_2(q),q\geq 7,$} \emph{J. Pure Appl. Algebra} {\bf 216} (2012), no. 12, 2720--2729.

\bibitem{HT3} H.~P. Tong-Viet\ and\ T.~P. Wakefield, On Huppert's conjecture for the Monster and Baby Monster, Monatsh. Math. {\bf 167} (2012), no.~3-4, 589--600.

\bibitem{HT2} H.P. Tong-Viet, T.P. Wakefield, On Huppert's conjecture for ${}^3D_4(q), q\ge 3$, \emph{Algebr. Represent. Theory} \textbf{16} (2013), no. 2, 471--490.


\bibitem{HT} H.P. Tong-Viet, T.P. Wakefield, {On Huppert's Conjecture for $F_4(2),$}
\emph{Int. J. Group Theory} {\bf1} (2012), no. 3, 1--9.

\bibitem{Wake} T.P. Wakefield, {Verifying Huppert's Conjecture for ${}^2G_2(q^2),$} \emph{Algebr Represent Theor.} {\bf14} (2011), 609--623.

\end{thebibliography}
\end{document}